\newtheorem{theorem}{Theorem}[section]
\newtheorem{lemma}[theorem]{Lemma}
\newtheorem{fact}[theorem]{Fact}
\theoremstyle{definition}
\newtheorem{definition}[theorem]{Definition}
\theoremstyle{remark}
\numberwithin{equation}{section}
\newcommand{\bfa}{{\bf a}}
\newcommand{\bfb}{{\bf b}}
\newcommand{\bfc}{{\bf c}}
\newcommand{\bfx}{{\bf x}}
\newcommand{\bfo}{{\bf o}}
\newcommand{\bfs}{{\bf s}}
\newcommand{\bfC}{{\mathbb C}}
\newcommand{\bfP}{{\mathbb P}}
\newcommand{\bfR}{{\mathbb R}}
\newcommand{\bfZ}{{\mathbb Z}}
\newcommand{\bfQ}{{\mathbb Q}}
\newcommand{\barj}{{\overline j}}
\newcommand{\barz}{{\overline z}}
\newcommand{\tildew}{{\widetilde w}}
\newcommand{\mapright}[1]{\smash{\mathop{   \hbox to 0.7cm{\rightarrowfill}}
  \limits^{#1}}}
\begin{document}

\title{Asymptotic Chow polystability in K\"ahler geometry}

\author{Akito Futaki}
\address{Department of Mathematics, Tokyo Institute of Technology, 2-12-1,
O-okayama, Meguro, Tokyo 152-8551, Japan}
\email{futaki@math.titech.ac.jp}


\subjclass[2000]{Primary 53C55, Secondary 53C21, 55N91 }
\date{May 24, 2011 }


\keywords{K\"ahler-Einstein manifold, Chow stability, toric Fano manifold }

\begin{abstract}
 It is conjectured that the existence of constant scalar curvature K\"ahler metrics will
 be equivalent to K-stability, or K-polystability depending on terminology (Yau-Tian-Donaldson conjecture).
 There is another GIT stability condition, called the asymptotic Chow polystability.
 This condition implies the existence of balanced metrics for polarized manifolds $(M, L^k)$ for all large $k$.
 It is expected that the balanced metrics converge to a constant scalar curvature metric as $k$ tends to
 infinity under further
suitable stability conditions.
In this survey article I will report on recent results saying that the asymptotic Chow polystability does not
hold for certain constant scalar curvature K\"ahler manifolds.  We also compare a paper of Ono with that of  Della Vedova and Zuddas.
\end{abstract}

\maketitle

\section{Introduction}

A pair $(M,L)$ of a compact complex manifold $M$ and
a positive line bundle $L$ over $M$ is called a polarized manifold.
Here a positive line bundle means a holomorphic line bundle $L$ such that
its first Chern class $c_1(L)$ is represented, as a de Rham class, by a positive closed $(1,1)$-form.
Therefore we can find a closed 2-form $\omega$ of the form
\begin{equation}
\omega = \frac i{2\pi} \sum_{i,j=1}^m\,g_{i\barj}\, dz^i \wedge d\bar z^j
\end{equation}
with $g = (g_{i\barj})$ being pointwise a positive definite Hermitian matrix,
and $z^1, \cdots, z^m$ local holomorphic coordinates.
Then $g$ defines a Hermitian metric of $M$, and $\omega$ is regarded as its fundamental 2-form.
Since $\omega$ is closed, $g$ becomes a K\"ahler metric.

Hence, for a polarized manifold $(M,L)$,
$c_1(L)$ is regarded as a K\"ahler class.
We seek a constant scalar curvature K\"ahler (cscK) metric with its K\"ahler form
in $c_1(L)$.

There are known obstructions related to holomorphic vector fields.
One is reductiveness of the Lie algebra $\mathfrak h(M)$ of all holomorphic vector fields on $M$ (\cite{Lic}, \cite{matsushima57}), and
the other is certain Lie algebra character $f : \mathfrak h(M) \to \bfC$ (\cite{futaki83.1}, \cite{calabi85}).
Besides them, there are obstructions
related to GIT stability. A well-known conjecture due to Yau, Tian, and Donaldson says the existence of constant scalar curvature metrics
in $c_1(L)$ will be equivalent to K-(poly)stability (\cite{donaldson02}).
K-stability is defined using the so-called {\it DF-invariant} as a numerical invariant for the Hilbert-Mumford criterion, see Definition \ref{DF}.
At the moment of this writing, it has been proved that the existence implies
K-stability (\cite{chentian04}, \cite{donaldson05}, \cite{stoppa0803}, \cite{mabuchi0910}), but it is still open whether K-stability implies the existence. Therefore at least K-stability is an obstruction to the existence. But there is
another stability condition which is an obstruction to the existence of cscK metrics when the automorphism group $\mathrm{Aut}(M,L)$ is discrete.
Here $\mathrm{Aut}(M,L)$ is the subgroup of the automorphism group $\mathrm{Aut}(L)$ of $L$ consisting of all automorphisms of $L$
commuting with the $\bfC^{\ast}$-action on the fibers. Notice that such automorphisms descend to automorphisms of $M$. Therefore $\mathrm{Aut}(M,L)$
is naturally identified with a subgroup of the automorphism group $\mathrm{Aut}(M)$ of $M$. From now on we regard $\mathrm{Aut}(M,L)$ as a subgroup
of $\mathrm{Aut}(M)$ in this way, and also the Lie algebra $\frak h_0$ of $\mathrm{Aut}(M,L)$ as a Lie subalgebra of the Lie algebra $\mathfrak h(M)$
of $\mathrm{Aut}(M)$.
The following result due to Donaldson shows in fact asymptotic Chow stability is an obstruction to the existence of cscK metrics.

\begin{theorem}[Donaldson \cite{donaldson01}]\label{Donaldson}
Let $(M,L)$ be a polarized manifold with $\mathrm{Aut}(M,L)$ discrete.
Suppose there exists a cscK metric in $c_1(L)$. Then $(M,L)$ is asymptotically Chow stable.
\end{theorem}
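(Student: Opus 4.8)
The plan is to deduce Chow stability from the existence of a \emph{balanced embedding} for every large $k$, using the moment-map description of Chow polystability. First I would invoke the Kempf--Ness/GIT principle in the form due to Zhang and Luo: the Chow point of the image of $M$ under the Kodaira embedding $\iota_k : M \hookrightarrow \mathbb{P}(H^0(M,L^k)^{*})$ is polystable for the $SL(N_k+1,\bfC)$-action if and only if there is a Hermitian inner product on $H^0(M,L^k)$ in which the embedding is balanced, i.e. the $L^2$-inner product induced by the associated Fubini--Study volume form coincides, up to a constant, with the chosen inner product. Equivalently, the balanced condition is the vanishing of the moment map for the action of $SU(N_k+1)$ on the space of ordered bases. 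Since $\mathrm{Aut}(M,L)$ is assumed discrete, the stabilizer of the Chow point is finite, so polystability coincides with stability; it therefore suffices to produce a balanced metric for all large $k$.

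Next I would construct an approximately balanced metric from the cscK metric and control the error by the Bergman-kernel expansion. Let $\omega \in c_1(L)$ be the cscK metric and let $h$ be the Hermitian metric on $L$ associated with $\omega$. Taking the basis of $H^0(M,L^k)$ that is $L^2$-orthonormal for $h^k$ and $\omega^m$, the density-of-states function $\rho_k = \sum_i |s_i|_{h^k}^2$ has the Tian--Zelditch--Catlin--Lu expansion $\rho_k = k^m + \tfrac12 S(\omega)\,k^{m-1} + O(k^{m-2})$, where $S(\omega)$ is the scalar curvature. The embedding is balanced precisely when $\rho_k$ is constant; because $S(\omega)$ is constant, the first two terms are already constant, so the given metric is balanced with error $O(k^{m-2})$. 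This is the step where the cscK hypothesis is used in an essential way.

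Then I would solve the balancing equation exactly by a quantitative implicit-function/Newton iteration starting from this approximate solution. After the appropriate normalization, the linearization of the balancing map at $\omega$ is a controlled perturbation of the Lichnerowicz operator $\mathcal{L} = \mathcal{D}^{*}\mathcal{D}$, whose kernel consists of holomorphy potentials, i.e. functions whose complex gradient is a holomorphic vector field. Because $\mathrm{Aut}(M,L)$ is discrete there are no nonzero holomorphic vector fields, so $\ker\mathcal{L}$ reduces to the constants, which are absorbed by the overall scaling ambiguity; hence $\mathcal{L}$ is invertible on the orthogonal complement of the constants and enjoys a spectral gap. This gap furnishes bounds on the inverse of the linearized balancing operator.

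The hard part, and the technical heart of the argument, is to make these estimates uniform as $k\to\infty$, since $\dim H^0(M,L^k) = N_k+1 \to \infty$. The key is to set the finite-dimensional moment-map geometry on the space of bases against the infinite-dimensional picture in which the scalar curvature is the moment map for the action of Hamiltonian diffeomorphisms on the space of Kähler potentials, with the Bergman expansion supplying the dictionary between the two; the spectral gap for $\mathcal{L}$ then controls the propagation of errors through the iteration. With uniform invertibility in hand, a contraction-mapping argument upgrades the $O(k^{m-2})$-approximately balanced metric to an exact balanced metric for all $k \gg 0$, yielding asymptotic Chow polystability and hence, by discreteness of $\mathrm{Aut}(M,L)$, asymptotic Chow stability.
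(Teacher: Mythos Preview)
The paper does not supply its own proof of this theorem: it is quoted as Donaldson's result from \cite{donaldson01} and used only as motivation, so there is nothing in the text to compare against. Your sketch is, however, a faithful outline of Donaldson's original argument---the Zhang--Luo equivalence of balanced embeddings with Chow polystability, the Tian--Zelditch--Catlin--Lu expansion showing that a cscK metric is approximately balanced to order $O(k^{m-2})$, and a quantitative perturbation argument whose linearization is governed by the Lichnerowicz operator, with the discreteness of $\mathrm{Aut}(M,L)$ guaranteeing the needed spectral gap. The only point to flag is that the uniform-in-$k$ invertibility you allude to is precisely the difficult analytic content of \cite{donaldson01}; Donaldson obtains it not by a direct implicit-function theorem on K\"ahler potentials but by working on the finite-dimensional symmetric space $GL(N_k+1)/U(N_k+1)$ and proving a lower bound for the derivative of the balancing map there (his ``Proposition 6/Lemma 7'' estimate), which is subtler than a straightforward contraction-mapping step.
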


Note that if $(M,L^k)$ is Chow stable then there exists a ``balanced metric'' for $L^k$. Donaldson further proved in the same paper \cite{donaldson01} that
as $k \to \infty$, the balanced metrics converge to the cscK metric (assuming the existence of a cscK metric). Because of this result, we may have an
expectation of a possibility to use the convergence of the balanced metrics as a one step in the proof of the implication of stability implying existence.

But the claim of this talk is that Donaldson's theorem does not hold if $\mathrm{Aut}(M,L)$ is not discrete. In fact we explain the following result.

\begin{theorem}[Ono-Sano-Yotsutani \cite{OSY09}]\label{OSY}
There is a toric Fano 7-manifold (suggested by Nill and Paffenholtz in \cite{NillPaffen}) which is K\"ahler-Einstein but not
asymptotically Chow-semistable (polystable).
\end{theorem}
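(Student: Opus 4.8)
The plan is to realize the manifold concretely as the toric variety $X_\Delta$ attached to the reflexive $7$-dimensional polytope $\Delta$ written down by Nill and Paffenholz, and then to separate the two assertions: that $X_\Delta$ carries a Kähler--Einstein metric, and that $(X_\Delta, -K_{X_\Delta})$ fails to be asymptotically Chow semistable. First I would read off the fan (the normal fan of $\Delta$, equivalently the spanning fan of the dual polytope $\Delta^\ast$) that $X_\Delta$ is smooth and Fano: smoothness amounts to every maximal cone being generated by part of a $\bfZ$-basis, and the Fano condition is reflexivity of $\Delta$, i.e.\ that the origin is the unique interior lattice point and $\Delta^\ast$ is again a lattice polytope. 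For the Kähler--Einstein assertion I would invoke the Wang--Zhu theorem: a toric Fano manifold admits a Kähler--Einstein metric if and only if its classical Futaki invariant vanishes, which for the anticanonical polarization is equivalent to the barycenter of the moment polytope sitting at the origin. So the first concrete computation is a single vector-valued integral $\int_\Delta x\,dx$, which one checks is zero even though $\Delta$ is \emph{not} symmetric in the Batyrev--Selivanova sense --- this non-symmetry being the whole point of the Nill--Paffenholz construction.

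The harder half is the instability, and here the organizing principle is that the classical Futaki invariant is only the first in a sequence of obstructions. I would use Futaki's integral invariants $\mathcal{F}_{\mathrm{Td}^p}$, $p=1,\dots,n$ with $n=\dim_{\bfC}M=7$, which appear as coefficients in the asymptotic expansion of the weight of the torus action (inside $\mathrm{Aut}(M,L)$) on $H^0(M,L^k)$ as $k\to\infty$. The key input is Futaki's theorem that \emph{asymptotic Chow semistability forces $\mathcal{F}_{\mathrm{Td}^p}=0$ for every $p$}, while $\mathcal{F}_{\mathrm{Td}^1}$ is a nonzero multiple of the ordinary Futaki invariant. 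Thus the problem reduces to showing that $\mathcal{F}_{\mathrm{Td}^1}=0$ (already established, being the Kähler--Einstein condition) but that some higher invariant $\mathcal{F}_{\mathrm{Td}^p}$ with $p\ge 2$ is nonzero on a suitable $\xi$ in the Lie algebra of the torus. A single nonvanishing value then contradicts asymptotic Chow semistability, hence also polystability.

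To make this computable I would translate the $\mathcal{F}_{\mathrm{Td}^p}$ into toric combinatorics, which is essentially Ono's purely lattice-theoretic necessary condition. Evaluated on $\xi$ in the Lie algebra of the torus, each invariant becomes an explicit rational expression in lattice-point sums and polytope integrals attached to $\Delta$ and its faces, governed by the Euler--Maclaurin comparison between $\sum_{a\in k\Delta\cap\bfZ^7}\langle\xi,a\rangle$ and $\int_{k\Delta}\langle\xi,x\rangle\,dx$: the successive Todd corrections along the faces are precisely the $\mathcal{F}_{\mathrm{Td}^p}$. I would then cut the computation down using the residual symmetry group of $\Delta$, which constrains which $\xi$ can give a nonzero value and collapses many of the face contributions, and carry out the resulting finite (but large) rational computation, in practice with computer algebra, to exhibit an explicit $p\ge 2$ and $\xi$ with $\mathcal{F}_{\mathrm{Td}^p}(\xi)\neq 0$.

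The main obstacle is exactly this last explicit evaluation: in dimension $7$ the polytope has many faces, the Euler--Maclaurin/Todd bookkeeping is intricate, and one must pin down the normalizations so that the invariant used is genuinely Futaki's obstruction to \emph{semistability}, so that its nonvanishing yields the strongest conclusion claimed. Once a single higher invariant is shown to be nonzero the argument closes at once; but extracting a clean, independently checkable nonzero rational number from the $7$-dimensional lattice-point combinatorics --- rather than an opaque machine output --- is the real work, and this is where the symmetry of the Nill--Paffenholz polytope must be exploited to keep the computation honest and finite.
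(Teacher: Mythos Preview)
Your proposal is correct and follows essentially the same strategy as the paper: Wang--Zhu gives the K\"ahler--Einstein metric from the vanishing barycenter, Futaki's theorem (Theorem~\ref{Futaki04}) reduces asymptotic Chow semistability to the vanishing of all $\mathcal{F}_{\mathrm{Td}^p}$, and one then exhibits a nonzero higher invariant by explicit toric computation. The only difference is in the computational packaging: the paper (via \cite{FOS08}) computes the span of the $\mathcal{F}_{\mathrm{Td}^p}$ as the coefficients of the Laurent expansion of a derivative of the Hilbert series $\mathcal{C}(e^{-t\bfb},C^\ast)$, whereas you frame it via Ono's Ehrhart/Euler--Maclaurin formulation; Section~6 of the paper shows these two descriptions coincide (equation~(\ref{Onoconjecture})), so the distinction is cosmetic rather than substantive.
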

This result relies on our earlier works \cite{futaki04-1} and \cite{FOS08}. The following result of Della Vedova and Zuddas, which is also related to our work \cite{futaki04-1},
claims that there are two dimensional examples.
\begin{theorem}[Della Vedova-Zuddas  \cite{DVZ10}]\label{DVZ}
There are constant scalar curvature K\"ahler surfaces
which admit an asymptotically Chow unstable polarization.
\end{theorem}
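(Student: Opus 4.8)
The plan is to realize the examples inside the family of geometrically ruled surfaces, where cscK metrics can be written down explicitly and where the obstructions to asymptotic Chow stability degenerate to one-dimensional integrals. By Theorem~\ref{Donaldson} any such counterexample must have non-discrete $\mathrm{Aut}(M,L)$, so I would start from $M=\mathbb{P}(\mathcal{O}\oplus\mathcal{L})\to\Sigma$, the projectivization of a rank-two bundle over a compact Riemann surface $\Sigma$ of genus $g$, which carries the fiberwise $\bfC^{\ast}$-action and hence a one-parameter group of automorphisms. Such surfaces admit explicit cscK (indeed extremal) K\"ahler metrics in a range of K\"ahler classes through the Calabi ansatz / momentum construction, and the relevant classes can be arranged to be rational, giving an honest polarization $c_1(L)$.

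The core mechanism is the sequence of integral invariants of \cite{futaki04-1}: attached to $(M,L)$ and the vector field $X$ generating the $\bfC^{\ast}$-action there is a family $f_1(X),f_2(X),\ldots$ (with $f_1$ the classical Futaki character $f$) all of which must vanish when $(M,L)$ is asymptotically Chow semistable. These arise from the asymptotic (Tian--Yau--Zelditch) expansion of the equivariant quantity $\sum_i\langle X\cdot s_i,s_i\rangle$ for an orthonormal basis $\{s_i\}$ of $H^0(M,L^k)$, equivalently from the Euler--Maclaurin comparison of the equivariant Riemann--Roch sum with its leading integral. Since any cscK metric forces $f_1=0$, the whole problem reduces to exhibiting one member of the family with $f_1=0$ but $f_j\neq 0$ for some $j\geq 2$.

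Next I would carry out the reduction of each $f_j(X)$ to an integral in the fiber momentum variable. Because the $\bfC^{\ast}$-action is fiberwise and the metric has the momentum-profile form of the Calabi ansatz, localization (or direct integration in the momentum coordinate) turns $f_j(X)$ into $\int_I \varphi_j(t)\,dt$ over the momentum interval $I$, with $\varphi_j$ built from the profile and from the Bergman/Todd expansion coefficients, the genus $g$ entering as an explicit parameter through $c_1(\Sigma)$. The cscK equation likewise becomes an ODE for the profile; imposing it (equivalently $f_1=0$) pins down the polarization parameter as a function of $g$ and of $\deg\mathcal{L}$.

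The hard part will be the \emph{decoupling}: verifying that the single constraint $f_1=0$ does not automatically annihilate the higher invariants. All the $f_j$ are moments of essentially the same profile against different, Bernoulli-number-weighted kernels coming from the Euler--Maclaurin expansion, so a priori they might conspire to vanish simultaneously. I would therefore compute $f_2$ explicitly at the cscK profile and show that, for a suitable genus $g$ and suitable degree, it is a nonzero rational expression in $g$: the first subleading Euler--Maclaurin correction is governed by the Bernoulli number $B_2=1/6$ together with contributions from the two fixed sections $\Sigma_0,\Sigma_\infty$, and the decisive point is that this boundary-type term survives the vanishing of the bulk term $f_1$. Establishing this nonvanishing for an explicit pair $(\Sigma,\mathcal{L})$ completes the construction and, via \cite{futaki04-1}, produces the asymptotically Chow unstable polarization on a cscK surface.
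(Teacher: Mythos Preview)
Your proposal has a genuine and fatal gap: the class of examples you chose, projective bundles $\bfP(\mathcal O\oplus\mathcal L)\to\Sigma$ over a curve, is precisely the class where the decoupling you need \emph{cannot} occur. Della Vedova and Zuddas compute the higher invariants for arbitrary projective bundles $\bfP(E)$ over curves (see formulas (\ref{Chow-bundle}) and (\ref{F-ell-bundle}) in Section~5), and the outcome is that all the $F_\ell(M,L)$ are mutually proportional: each equals a positive constant $C_\ell$ times the common factor $\sum_j \lambda_j\,\mathrm{rank}(E_j)(\mu(E_j)-\mu(E))$. Thus $F_1=0$ forces $F_\ell=0$ for every $\ell$, and your ``hard part'' --- showing $f_2\neq 0$ at the cscK profile --- is impossible on ruled surfaces. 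This is also consistent with Theorem~\ref{ACGTtheorem}: a cscK metric on $\bfP(E)$ over a curve of genus $g\ge 2$ exists iff $E$ is slope polystable, i.e.\ $\mu(E_j)=\mu(E)$ for every summand, which annihilates the common factor directly. So your expectation that the boundary-type Euler--Maclaurin correction ``survives'' the vanishing of the bulk term is simply false in this setting.

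The actual Della Vedova--Zuddas examples are blow-ups of $\bfC\bfP^2$ at four points with three of them aligned. The mechanism is a dimension count in the K\"ahler cone rather than a single explicit computation: by LeBrun--Simanca the extremal cone $\mathcal E$ is open, and inside it the cscK locus $\mathcal C=\{F_1=0\}$ is nonempty by Arezzo--Pacard. One then shows that the higher-vanishing locus $\mathcal Z=\{F_2=0\}$ is a \emph{proper} Zariski-closed subset of $\mathcal C$, so any rational point of $\mathcal C\setminus\mathcal Z$ gives a cscK but asymptotically Chow unstable polarization. The blow-up setting is essential because it supplies enough parameters in the K\"ahler cone for $\{F_1=0\}$ and $\{F_2=0\}$ to be genuinely independent conditions --- exactly what fails for ruled surfaces over curves.
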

The following result of Odaka uses a formula of DF-invariant for blow-ups along the flag ideals due to Wang \cite{Xiaowei08} and Odaka \cite{Odaka09}.
\begin{theorem}[Odaka \cite{Odaka10}]\label{Odaka}
There are examples of K-stable polarized orbifolds which are asymptotically Chow unstable.
In fact, these examples are K\"ahler-Einstein orbifolds
with finite automorphisms. Hence Donaldson's theorem does not hold for orbifolds.
\end{theorem}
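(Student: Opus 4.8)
The statement is an existence result, so the plan is to exhibit one explicit example and verify the two required properties separately. First I would produce a Fano orbifold $X$ with only quotient singularities, with finite automorphism group, and with $L=-K_X$ admitting a K\"ahler-Einstein metric; the toric Fano constructions behind Theorem \ref{OSY} and \cite{futaki04-1} are the natural source, since there all the relevant weights are combinatorial and hence explicitly computable. That $(X,L)$ is K-stable would then follow from the orbifold Yau-Tian-Donaldson correspondence (a K\"ahler-Einstein Fano with discrete automorphism group is K-stable); alternatively one verifies positivity of the Donaldson-Futaki invariant directly from the intersection formula below. The whole content of the theorem is therefore to show that this same $(X,L)$ fails to be asymptotically Chow semistable.

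For this I would use test configurations realized, after Wang \cite{Xiaowei08} and Odaka \cite{Odaka09}, as blow-ups $\mathcal{X}\to X\times\bfP^1$ along a $\bfC^{\ast}$-invariant flag ideal, with a polarization $\mathcal{L}$. On such a model the Donaldson-Futaki invariant is computed, up to a fixed positive factor, by the intersection number
\[
DF(\mathcal{X},\mathcal{L}) \;\sim\; \frac{\bar S}{n+1}\,(\mathcal{L}^{\,n+1}) \;+\; (\mathcal{L}^{\,n}\cdot K_{\mathcal{X}/\bfP^1}),\qquad n=\dim_{\bfC}X,
\]
with $\bar S$ the average scalar curvature; this is precisely the leading coefficient in the expansion of $w(k)/(k\,d_k)$, where $d_k=\dim H^0(X,L^k)$ and $w(k)$ is the total weight of the $\bfC^{\ast}$-action on the central fibre. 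The decisive observation is that Chow semistability at each level $k$ is governed not by this leading term alone but by the full weight, so that one must expand both $d_k$ and $w(k)$ and track all the sub-leading coefficients. I would compute these by Kawasaki's orbifold Riemann-Roch, so that each coefficient acquires extra contributions localized on the twisted sectors of $X$ and of the central fibre.

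The obstruction then emerges as follows. Asymptotic Chow semistability forces a whole sequence of sign conditions on these coefficients --- equivalently the vanishing of the higher invariants of \cite{futaki04-1}, of which the Donaldson-Futaki invariant is only the first. For a smooth manifold the K\"ahler-Einstein condition propagates through this sequence in exactly the way that makes Donaldson's Theorem \ref{Donaldson} hold; but for an orbifold the Kawasaki correction terms supported on the singular strata enter the sub-leading coefficients and are generically nonzero even once the DF/K\"ahler-Einstein condition is imposed. I would isolate the first such corrected coefficient --- the Chow obstruction lying beyond the Futaki invariant --- and show, for the chosen example, that the discrepancy contributions of the orbifold points give it a definite nonzero sign, so that the normalized Chow weight has the unstable sign for all large $k$. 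The main obstacle is exactly this computation: evaluating the twisted-sector contributions explicitly and certifying the sign of the resulting invariant. This is the very term that vanishes identically in the smooth case, which is why the analogue of Donaldson's theorem survives for manifolds but fails for orbifolds.
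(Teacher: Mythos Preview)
Your proposal has two concrete gaps. First, you suggest taking the example from the toric Fano constructions behind Theorem~\ref{OSY}, but any toric variety carries an effective action of the $m$-dimensional torus and hence has infinite automorphism group; it can never furnish an example with \emph{finite} $\mathrm{Aut}(X,L)$, which is the whole point of Theorem~\ref{Odaka}. Odaka's examples are non-toric orbifolds chosen precisely so that $\mathfrak h_0=0$.

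Second, and more seriously, you identify the obstruction as ``the higher invariants of \cite{futaki04-1}''. Those invariants $\mathcal F_{\mathrm{Td}^\ell}$ are Lie algebra characters on $\mathfrak h_0$; when $\mathrm{Aut}(X,L)$ is finite they are all identically zero and obstruct nothing. This is exactly why Donaldson's Theorem~\ref{Donaldson} works in the smooth discrete-automorphism case --- not because the K\"ahler--Einstein condition ``propagates'' through the sequence of invariants, but because there are no nontrivial one-parameter subgroups of $\mathrm{Aut}(X,L)$ to test against. The instability in Odaka's examples must therefore be detected by a genuinely \emph{non-product} test configuration. Your instinct that Kawasaki Riemann--Roch is the source of the discrepancy is correct, but the mechanism is different: for an orbifold, $\chi(X,L^k)$ and $w(X,L^k)$ are quasi-polynomials rather than polynomials in $k$, and the periodic correction terms (supported on the singular strata) enter the Chow weight $\mathrm{Chow}(X,L^k)$ directly, not through any $\mathcal F_{\mathrm{Td}^\ell}$. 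The paper's route, following \cite{Odaka10}, is to bypass this expansion entirely: K-stability is established via the intersection-theoretic formula of Theorems~\ref{Odaka1} and~\ref{Odaka2} (or via the general results that semi-log-canonical canonically polarized varieties and log-terminal Calabi--Yau varieties are K-stable), while asymptotic Chow instability is read off from an explicit destabilizing test configuration whose Chow weight is computed directly.
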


Note that there is an argument without using balanced metrics to show that cscK metrics minimize the K-energy when the automorphism group
is not discrete, see Li \cite{LiChi10}.

\section{ What is (asymptotic) Chow stability ?}

Let
$V_k := H^0(M,\mathcal O(L^k))^*$ be the vector space of all holomorphic sections of $L^k$,
$M_k \subset \bfP(V_k)$ the image of Kodaira embedding by $L^k$, and
$d_k$ the degree of $M_k$ in $\bfP(V_k)$.

Denote by  $m$ the dimension of $M$: $m = \dim_{\bfC} M$.
An element of $\bfP(V_k^*) \times \cdots \times \bfP(V_k^*)$ ($m+1$ times)
defines  $m+1$ hyperplanes $H_1,\,\cdots\, ,H_{m+1}$ in $\bfP(V_k)$. Then the set
$$\{(H_1, \cdots , H_{m+1}) \in \bfP(V_k^*) \times \cdot\cdot \times \bfP(V_k^*)|H_1 \cap \cdot\cdot \cap H_{m+1} \cap M_k \ne \emptyset\}$$
becomes a divisor in $\bfP(V_k^*) \times \cdot\cdot \times \bfP(V_k^*)$,
and this divisor is defined by a polynomial
$${\hat M}_k \in (\mathrm{Sym}^{d_k}(V_k))^{\otimes (m+1)},$$
called the {\bf Chow form}. Consider the $SL(V_k)$-action on $(\mathrm{Sym}^{d_k}(V_k))^{\otimes (m+1)}$.
Stabilizer of ${\hat M}_k$ under $SL(V_k)$-action is $\mathrm{Aut}(M,L)$.
In Theorem \ref{Donaldson} by Donaldson, ``$\mathrm{Aut}(M,L)$ is discrete'' means ``the stabilizer is finite''.

\begin{definition}\label{Chow} Let $(M,L)$ be a polarized manifold.
\begin{enumerate}
\item[1]
$M$ is said to be Chow polystable w.r.t. $L^k$ if the orbit of ${\hat M}_k$ in\\
$(\mathrm{Sym}^{d_k}(V_k))^{\otimes (m+1)}$ under the action of
$\mathrm{SL}(V_k)$ is closed.
\item[2]
$M$ is Chow stable w.r.t $L^k$ if $M$ is polystable and the stabilizer at ${\hat M}_k$
of the action of  $\mathrm{SL}(V_k)$ is finite.
\item[3]
$M$ is Chow semistable w.r.t. $L^k$ if the closure of the orbit of ${\hat M}_k$
in\\
$(\mathrm{Sym}^{d_k}(V_k))^{\otimes (m+1)}$ under the action of
$\mathrm{SL}(V_k)$ does not
contain\\
$\bfo \in (\mathrm{Sym}^{d_k}(V_k))^{\otimes (m+1)}$.
\item[4]
$M$ is asymptotically Chow polystable (resp. stable or semistable) w.r.t. $L$ if there exists a
$k_0 > 0$ such that $M$ is Chow polystable  (resp. stable or semistable) w.r.t. $L^k$ for all $k \ge k_0$.
\end{enumerate}
\end{definition}

In the case when $\mathrm{Aut}(M,L)$ is not discrete Mabuchi tried to extend Theorem \ref{Donaldson} by Donaldson.
He first showed that in this case there is an obstruction to asymptotic Chow semistability:

\begin{theorem}[Mabuchi \cite{mabuchi-a}]\label{mabuchi-a}
Let $(M,L)$ be a polarized manifold.
If $\mathrm{Aut}(M,L)$ is not discrete then
there is an obstruction to asymptotic Chow semistability.
\end{theorem}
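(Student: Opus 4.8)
The plan is to manufacture, out of a positive-dimensional $\mathrm{Aut}(M,L)$, a one-parameter subgroup that fixes the Chow point, and then to use that a fixed point of a $\bfC^\ast$-action can be Chow semistable only when its weight vanishes. First I would reduce to a single $\bfC^\ast$-action. Since $\mathrm{Aut}(M,L)$ is not discrete, its identity component is a positive-dimensional linear algebraic group, so it contains a nontrivial algebraic one-parameter subgroup, which we may take to be a homomorphism $\lambda\colon\bfC^\ast\to\mathrm{Aut}(M,L)$ generated by a holomorphic vector field $X\in\mathfrak h_0$ (the genuinely unipotent case is addressed at the end). Because $\lambda$ preserves $M_k\subset\bfP(V_k)$, it preserves the Chow divisor and hence fixes the line $\bfC\cdot\hat M_k$; thus $\hat M_k$ is an eigenvector of the induced $GL(V_k)$-action on $(\mathrm{Sym}^{d_k}(V_k))^{\otimes(m+1)}$, say $\lambda(t)\cdot\hat M_k=t^{\,W_k(X)}\hat M_k$, where $W_k(X)$ denotes the resulting $GL(V_k)$-weight.

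The decisive step is the semistability dichotomy. Passing to $SL(V_k)$, I would split the generator $A_k$ of the action on $V_k^\ast=H^0(M,\mathcal O(L^k))$ into its trace-free part and the scalar $\frac{w_k}{N_k}\mathrm{Id}$, where $w_k=\mathrm{tr}\,A_k$ and $N_k=\dim H^0(M,\mathcal O(L^k))$. After normalizing $\lambda$ to a one-parameter subgroup of $SL(V_k)$, its weight on the Chow form becomes, up to the normalization convention,
\[
e_k(X)=W_k(X)-(m+1)\,d_k\,\frac{w_k}{N_k}.
\]
The orbit of the vector $\hat M_k$ under this normalized $\bfC^\ast$ is $\{\,t^{\,e_k(X)}\hat M_k\,\}$, whose closure contains $\bfo$ precisely when $e_k(X)\neq0$. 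Hence Chow semistability w.r.t. $L^k$ forces $e_k(X)=0$, and asymptotic Chow semistability forces $e_k(X)=0$ for all $k\ge k_0$. Note that $\mathrm{Aut}(M,L)$ fixes $[\hat M_k]$ only \emph{projectively}; the obstruction is exactly that its lift to $SL(V_k)$ carries the a priori nonzero weight $e_k(X)$.

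Next I would compute $e_k(X)$ asymptotically and extract invariants. By equivariant Riemann--Roch (localization), $N_k$, $d_k$, $w_k$ and $W_k(X)$ are polynomials in $k$ --- $N_k$ and $d_k$ of degree $m$, and $w_k$, $W_k(X)$ of degree $m+1$ --- whose coefficients are integrals over $M$ of equivariant characteristic classes built from a Hamiltonian potential for $X$. Substituting these expansions shows that $N_k\,e_k(X)$ is a polynomial in $k$ whose coefficients are integral invariants $\mathcal F_1(X),\dots,\mathcal F_m(X)$ of $(M,L)$, each linear in $X\in\mathfrak h_0$, with $\mathcal F_1$ proportional to the classical Futaki character $f$ of \cite{futaki83.1}. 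Since a nonzero polynomial has only finitely many roots, the vanishing of $e_k(X)$ for all large $k$ forces every $\mathcal F_i(X)=0$. This is the promised obstruction: once $\mathfrak h_0\neq0$, asymptotic Chow semistability can hold only if all the $\mathcal F_i$ vanish on $\mathfrak h_0$, and there is no reason for them to do so.

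The main obstacle is this last, computational step: one must carry out the equivariant Riemann--Roch expansion carefully enough to exhibit the coefficients as genuine invariants, independent of the K\"ahler metric chosen in $c_1(L)$ and of the lift of $X$, and --- crucially --- to see that they are not forced to vanish, for otherwise the obstruction would be vacuous. Demonstrating nontriviality is precisely what the later examples supply (Theorems \ref{OSY}, \ref{DVZ}, \ref{Odaka}), in which some higher $\mathcal F_i$ is nonzero even though $\mathcal F_1=f=0$. A secondary point is the passage from the $\bfC^\ast$-directions, where the Hilbert--Mumford weight is defined, to all of $\mathfrak h_0$: one defines each $\mathcal F_i$ analytically, by its equivariant integral, for every $X\in\mathfrak h_0$, checks linearity in $X$, and uses the torus directions --- on which vanishing is forced --- to pin the functionals down, the unipotent directions contributing weight zero automatically.
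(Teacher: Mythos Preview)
Your proposal is correct and follows essentially the same route as the paper. The paper does not prove Theorem~\ref{mabuchi-a} directly (it is cited to Mabuchi), but its content is unpacked in the outline of the proof of Theorem~\ref{Futaki04}, and your argument matches that outline: pick a $\bfC^{\ast}$-subgroup of $\mathrm{Aut}(M,L)$, observe that it fixes the Chow point so that Chow semistability forces the $SL(V_k)$-normalized weight to vanish for all large $k$, and then expand this weight via equivariant Riemann--Roch to extract the invariants $\mathcal F_{\mathrm{Td}^i}$ as the coefficients that must vanish.

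The only presentational difference is the normalization. The paper fixes the Hamiltonian by $\int_M u_X\,\omega^m=0$ (equation~(\ref{normalization})), which forces $b_0=0$; with that choice the Chow weight reduces to $w_k/(kN_k)$, so semistability gives $w_k=0$ for all large $k$, and the coefficients of $w_k$ in $k$ are exactly the $\mathcal F_{\mathrm{Td}^j}(X)$. You instead keep an arbitrary lift and subtract the trace by hand, arriving at $e_k(X)=W_k(X)-(m+1)d_k\,w_k/N_k$; expanding $N_k e_k(X)$ then yields the same family of invariants (this is the content of Section~5, where the $F_\ell$ built from the Chow weight are identified with $\mathcal F_{\mathrm{Td}^\ell}/\mathrm{vol}(M,L)$ in (\ref{DVZformula})). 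Your version has the mild advantage of making lift-independence manifest; the paper's version is cleaner once the normalization is in place. Either way the obstruction is the same, and the nontriviality you flag as the ``main obstacle'' is exactly what the paper supplies via Theorems~\ref{OSY}--\ref{Odaka}.
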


This obstruction is expressed in the paper \cite{futaki04-1} as a series of integral
invariants, which are explained later in the next section. Mabuchi then proved the following result.

\begin{theorem}[Mabuchi \cite{mabuchi-c}]\label{mabuchi-c}
Let $(M,L)$ be a polarized manifold, and suppose $\mathrm{Aut}(M,L)$ is not discrete.
If there exists a constant scalar curvature K\"ahler metric in $c_1(L)$ and if the obstruction in Theorem \ref{mabuchi-a}
vanishes then
$(M,L)$ is asymptotically Chow polystable.
\end{theorem}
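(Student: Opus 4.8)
The plan is to reduce asymptotic Chow polystability to the existence of balanced metrics, and then to manufacture such metrics perturbatively out of the given cscK metric, using the vanishing of Mabuchi's invariants precisely to cancel the obstructions created by the non-discreteness of $\mathrm{Aut}(M,L)$. The entry point is the equivalence, due to Zhang, Luo, Phong--Sturm and Wang, between Chow polystability of $(M,L^k)$ and the existence of a \emph{balanced} metric for $L^k$: a Hermitian metric $h$ whose induced $L^2$-inner product on $H^0(M,\mathcal O(L^k))$ is, up to scale, carried onto the Fubini--Study inner product by the Kodaira embedding $M_k \subset \bfP(V_k)$. Equivalently, $M_k$ sits at a zero of the moment map $\mu_k$ for the $\mathrm{SU}(V_k)$-action on $\bfP(V_k)$. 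Thus it suffices to construct balanced metrics for all $k \ge k_0$, and the whole difficulty is concentrated in the moment-map directions lying along the Lie algebra $\mathfrak h_0$ of $\mathrm{Aut}(M,L)$, which now fail to act freely.

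Next I would set up the asymptotic construction. Starting from the cscK metric $\omega_0 \in c_1(L)$, the Tian--Yau--Zelditch expansion of the Bergman density $\rho_k(\omega_0) = \sum_\alpha |s_\alpha|^2_{h_0^k}$ has subleading term governed by the scalar curvature; constancy of $S(\omega_0)$ forces that term to be constant, so $\omega_0$ is balanced to leading order. I would then seek a formal series $\omega_k = \omega_0 + i\,\p\bp\,\phi_k$ with $\phi_k \sim \sum_{j\ge 1} k^{-j}\psi_j$ solving the balanced equation $\rho_k \equiv \mathrm{const}$ order by order in $1/k$. To leading order the linearization is the Lichnerowicz-type operator $\mathcal D^*\mathcal D$, whose kernel is exactly the space of Hamiltonian holomorphy potentials, i.e. the dual of $\mathfrak h_0$. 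Hence at each order the equation is solvable only if the projection of the right-hand side onto this kernel vanishes, and these projections are precisely the series of integral invariants of \cite{futaki04-1} appearing in Theorem \ref{mabuchi-a}. The hypothesis that this obstruction vanishes is exactly what permits the iteration to proceed to all orders.

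Finally I would upgrade the formal solution to a genuine balanced metric for each large $k$ by an implicit-function / fixed-point argument in the spirit of Donaldson's proof of Theorem \ref{Donaldson}. Because $\mathrm{Aut}(M,L)$ is not discrete one cannot simply invert the linearized operator; instead one works $\mathrm{Aut}(M,L)$-equivariantly, splits off the finite-dimensional cokernel modelled on $\mathfrak h_0$, and invokes the vanishing of the invariants to conclude that the true obstruction on that cokernel is zero, so that an honest zero of $\mu_k$ exists near the approximate one. Producing balanced metrics for all $k \ge k_0$ then yields asymptotic Chow polystability by Definition \ref{Chow}.

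The main obstacle is this last step: obtaining estimates on the inverse of the linearized operator that are \emph{uniform} in $k$ as $N_k = \dim V_k \sim k^m \to \infty$, while simultaneously keeping control of the finite-dimensional obstruction space spanned by the holomorphy potentials. One must show that the vanishing of Mabuchi's invariants cancels the obstruction not merely formally but to an order high enough that the residual error is dominated by the uniform lower bound on the non-degenerate part of $\mathcal D^*\mathcal D$; the equivariant bookkeeping that keeps kernel and cokernel aligned with $\mathfrak h_0$ throughout the iteration is the delicate heart of the argument.
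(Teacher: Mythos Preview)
The paper does not contain a proof of this theorem; Theorem~\ref{mabuchi-c} is merely stated and attributed to Mabuchi~\cite{mabuchi-c}, so there is no ``paper's own proof'' to compare against. Your outline is a faithful sketch of the strategy actually used in~\cite{mabuchi-c} (and is the natural extension of Donaldson's argument in~\cite{donaldson01} to the non-discrete case): reduce to the existence of balanced metrics via the Zhang--Luo moment-map picture, build an approximate solution from the cscK metric using the Tian--Yau--Zelditch expansion, observe that the obstructions to solving order by order lie in the kernel of the Lichnerowicz operator and coincide with the invariants $\mathcal F_{\mathrm{Td}^i}$ of~\cite{futaki04-1}, and then close up with a quantitative perturbation argument. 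You have also correctly located the genuine difficulty, namely the uniform-in-$k$ control of the inverse of the linearized operator modulo $\mathfrak h_0$; this is exactly where Mabuchi's energy-theoretic machinery (the ``approximate critical metrics'' of~\cite{mabuchi-a} and the $T$-stability apparatus) does the real work, and your sketch stops short of indicating how those estimates are obtained. As a proof \emph{plan} it is accurate, but as a proof it is incomplete precisely at the point you yourself flag.
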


\section{Obstructions to asymptotic Chow semistability}

The Lie algebra $\frak h_0$ of $\mathrm{Aut}(M,L)$ is expressed in various ways.
Recall that $\frak h(M)$ is the Lie algebra of all holomorphic vector fields on $M$, which is the Lie algebra
of $\mathrm{Aut}(M)$.
First of all it can be expressed as
$$
 \mathfrak h_0 = \{X \in \frak h(M)\ |\ \mathrm{zero}(X) \ne \emptyset\}.
 $$
Secondly it can be expressed also as
$$ \mathfrak h_0  = \{X \in \frak h(M)\ |\ \exists u \in C^{\infty}(M)\otimes\bfC
\ \mathrm{s.t.}\
 X = \mathrm{grad}'u = g^{i\barj}\frac{\partial u}{\partial \barz^j}\frac{\partial}{\partial z^i}\}.$$
 Or we may say that $\mathrm{Aut}(M,L)$ is the linear algebraic part of $\mathrm{Aut}(M)$.
 Mabuchi's obstruction to asymptotic Chow semistability can be re-stated in terms of integral invariants
 $\mathcal F_{\mathrm{Td^i}}$'s, which are explained below,
 as follows.

\begin{theorem}[\cite{futaki04-1}]\label{Futaki04}
Let $(M,L)$ be a polarized manifold with $\dim_{\bfC} M = m$.
\begin{enumerate}
\item[(a)]\ \
The vanishing of Mabuchi's obstruction is equivalent to the vanishing of Lie algebra characters
$\mathcal F_{\mathrm{Td^i}} : \mathfrak h_0 \to \bfC$, for $i = 1, \cdots, m.$
\item[(b)]\ \
$\mathcal F_{\mathrm{Td^1}}$  is an obstruction to the existence of a constant scalar curvature K\"ahler metric in $c_1(L)$,
which is sometimes called the classical Futaki invariant.
\end{enumerate}
\end{theorem}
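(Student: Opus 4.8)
The plan is to express both conditions in part (a) as the vanishing of one and the same family of coefficients in a single asymptotic expansion in $k$, so that their equivalence reduces to an identity between polynomials. First I would make Mabuchi's obstruction explicit. Since $\mathrm{Aut}(M,L)$ is not discrete, $\frak h_0$ contains generators of algebraic one-parameter subgroups $\lambda\colon \bfC^\ast \to \mathrm{Aut}(M,L)$. Each such $\lambda$ acts on $V_k = H^0(M,\mathcal O(L^k))^\ast$, hence on $(\mathrm{Sym}^{d_k}(V_k))^{\otimes(m+1)}$, and the Chow form $\hat M_k$ is a weight vector for this action. After projecting to $\mathrm{SL}(V_k)$ the Hilbert--Mumford criterion for Chow semistability demands $\mu(\hat M_k,\lambda)\ge 0$ for every $\lambda$; applying this to $\lambda$ and to $\lambda^{-1}$ forces $\mu(\hat M_k,\lambda)=0$. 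This is precisely Mabuchi's obstruction, now encoded as a linear functional $X\mapsto F_k(X)$ on $\frak h_0$ that must vanish for all $k\ge k_0$.

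Next I would compute $F_k$ by equivariant Riemann--Roch. Lifting $X\in\frak h_0$ to $L$ and writing $u_X$ for its holomorphy potential, the total weight $w_k(X)=\mathrm{tr}(X\mid H^0(M,L^k))$ is given, for $k$ large (so that higher cohomology vanishes), by the equivariant index $w_k(X)=\int_M e^{k(\omega+u_X)}\,\mathrm{Td}^X(M)$, an integral of equivariantly closed forms that is a polynomial in $k$ of degree $m+1$; likewise $N_k=\dim H^0(M,L^k)$ is a polynomial of degree $m$. Since $F_k(X)$ is a universal polynomial combination of $w_k(X)$, $N_k$ and $d_k$, it too is polynomial in $k$, and therefore $F_k\equiv 0$ for all $k\ge k_0$ if and only if every coefficient of $F_k$ vanishes.

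The heart of the matter is to read off those coefficients. Expanding $e^{k(\omega+u_X)}\mathrm{Td}^X(M)$ in powers of $k$ and extracting the top-degree part on $M$ yields, for each power of $k$, an integral of the shape $\int_M u_X\,\mathrm{Td}_p(M)\wedge\omega^{m-p+1}$ plus lower-order equivariant corrections; a direct computation identifies these, up to nonzero constants, with a unitriangular combination of the invariants $\mathcal F_{\mathrm{Td}^i}(X)$ for $i=1,\dots,m$. Because the change of variables is triangular with nonvanishing diagonal, the vanishing of all coefficients of $F_k$ is equivalent to $\mathcal F_{\mathrm{Td}^i}\equiv 0$ for every $i$, which proves (a). That each $\mathcal F_{\mathrm{Td}^i}$ is a genuine character of $\frak h_0$ and is independent of the K\"ahler metric chosen in $c_1(L)$ follows from the standard computation that its first variation along a path of potentials is the integral of an exact form.

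For (b) I would specialize to $i=1$. Since $\mathrm{Td}_1=\tfrac12 c_1(M)$, the defining integral of $\mathcal F_{\mathrm{Td}^1}(X)$ collapses to a constant multiple of $\int_M u_X\,(s_\omega-\bar s)\,\omega^m$, where $s_\omega$ is the scalar curvature of $\omega$ and $\bar s$ its average; this is exactly the classical Futaki invariant, which vanishes identically when $\omega$ has constant scalar curvature and is independent of the chosen metric in $c_1(L)$, hence is an obstruction to a cscK metric. The main obstacle is the coefficient-matching of the third paragraph: one must push the equivariant Riemann--Roch expansion far enough to see the precise unitriangular relation between the coefficients of the Chow weight $F_k$ and the $\mathcal F_{\mathrm{Td}^i}$, since it is this triangularity --- rather than a single implication --- that upgrades the two vanishing conditions to a true equivalence.
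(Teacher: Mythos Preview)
Your proposal is correct and follows essentially the same route as the paper's outline: for (a), both reduce Mabuchi's obstruction to the vanishing of the weight polynomial $w_k(X)$ on $H^0(M,L^k)$, expand it via the equivariant Riemann--Roch (index) formula, and identify the coefficients of $k^j$ with the characters $\mathcal F_{\mathrm{Td}^j}$; for (b), both specialize $\mathrm{Td}^1=\tfrac12 c_1$, observe that the second term in Definition~\ref{character} is a divergence and hence vanishes, and recognize the remaining integral as the classical Futaki invariant. The only cosmetic difference is that the paper phrases (a) as choosing a lift of the $\bfC^\ast$-action to $L$ so that the induced action lies in $SL(H^0(L^k))$ (equivalently, normalizing $\int_M u_X\,\omega^m=0$ so that $b_0=0$) and then asserts that the $k^j$-coefficient of $w_k$ equals $\mathcal F_{\mathrm{Td}^j}(X)$ on the nose, whereas you work with the $SL$-normalized Chow weight and hedge with a unitriangular relation; in fact, as the paper later records in equation~(\ref{DVZformula}), the relation is a direct proportionality, so your triangularity argument is more cautious than necessary but not wrong.
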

The Lie algebra characters $\mathcal F_{\mathrm{Td^i}}$ are defined as follows.
For $X \in \mathfrak h_0$ we have
$$ i(X) \omega = - \bar{\partial}\,u_X. $$
Assume the normalization
\begin{equation}\label{normalization}
\int_M u_X\ \omega^m = 0.
\end{equation}
Choose a type $(1,0)$-connection $\nabla$ in $T'M$.
Put
$$ L(X) = \nabla_X - L_X \in \Gamma(\mathrm{End}(T'M))$$
and let
$$ \Theta \in \Gamma(\Omega^{1,1}(M)\otimes\mathrm{End}(T'M))$$
be the (1,1)-part of the curvature form of $\nabla$.
\begin{definition}\label{character}\ \ For $\phi \in I^p(GL(m,\bfC))$, we define
\begin{eqnarray*}
{\mathcal F}_{\phi}(X) &=& (m-p+1) \int_M \phi(\Theta) \wedge u_X\,\omega^{m-p}
\\ & & + \int_M \phi(L(X) + \Theta) \wedge \omega^{m-p+1}.\nonumber
\end{eqnarray*}
\end{definition}
Notice that ${\mathcal F}_{\phi}(X)$ is linear in $X$.
One can show that
${\mathcal F}_{\phi}$ is independent of choices of $\omega$ and $\nabla$.
from which it follows that  ${\mathcal F}_{\phi}$ is invariant under the adjoint action of $\mathrm{Aut}(M)$.
In particular ${\mathcal F}_{\phi}$ is a Lie algebra character.

\begin{proof}[Outline of the proof of Theorem \ref{Futaki04}]\ \ To show (a),
suppose we have a $\bfC^\ast$-action on $M$.
Asymptotic Chow semistablility implies that there is a lift of the $\bfC^\ast$-action to $L$
such that it induces $SL(H^0(L^k))$-action for all $k$.
So, the weight $w_k$ of the action on $H^0(L^k)$ is zero for all $k$.
But $w_k$ can be expressed using the equivariant index formula.
The coefficient of $k^j$ is ${\mathcal F}_{\mathrm{Td^j}}(X)$
where $X$ is the infinitesimal generator of the $\bfC^{\ast}$-action.

To show (b), recall that the first Todd class $\mathrm{Td^1}$ is equal to $\frac12 c_1$. Thus it corresponds to
one half of the trace. Hence the second term of ${\mathcal F}_{\mathrm{Td^1}}(X)$ in Definition \ref{character}
is one half of the integral of the divergence of $X$, which of course vanishes by the divergence theorem. Hence we have
$$ {\mathcal F}_{\mathrm{Td^1}}(X) = \frac m2 \int_M u_X c_1 \wedge \omega^{m-1} $$
where $c_1$ denotes the first Chern form, or the Ricci form. Since $m c_1 \wedge \omega^{m-1} = S \omega^m$ where $S$ is the
scalar curvature, the last integral
becomes zero if $S$ is constant because of the normalization (\ref{normalization}).
This completes the outline of the proof of Theorem \ref{Futaki04}. See \cite{futaki04-1} or \cite{FOS08} for
the detail of the proof.
\end{proof}

Now we have natural questions:

Question (a)\ \ In Theorem \ref{mabuchi-c}, can't we omit the assumption of the vanishing of the obstruction ?
That is to say, if there exists a constant scalar curvature K\"ahler metric in $c_1(L)$ then doesn't the
obstruction necessarily vanish ?

Question (b)\ \ In Theorem\ref{Futaki04}, if $\mathcal F_{\mathrm{Td^1}} = 0$
then $\mathcal F_{\mathrm{Td^2}}=
\cdots = \mathcal F_{\mathrm{Td^m}} = 0$ ?

\bigskip

In \cite{FOS08} we studied the characters $\mathcal F_{\mathrm{Td^i}}$'s in terms of Hilbert series for toric Fano manifolds.
We showed that the linear span of
$\mathcal F_{\mathrm{Td^1}}, \cdots, \mathcal F_{\mathrm{Td^m}}$ coincides with the linear span of the characters obtained as derivatives of
the Hilbert series. Note that the derivatives of the Hilbert series are computed by inputing toric data into a computer.
We saw that, up to dimension three among toric Fano manifolds, there are no counterexamples to Question (b).
But later a seven dimensional
example of Nill and Paffenholz \cite{NillPaffen} appeared, and
Ono, Sano and Yotsutani \cite{OSY09} checked that this seven dimensional example shows that the answers to Questions (a) and (b) are No.
Now we turn to the Hilbert series.

\section{Hilbert series.}

Let
$M$ be a toric Fano manifold of $\dim M = m$. We take
$L = K_M^{-1}$. Then $L$ is a very ample line bundle.
Since $M$ is toric, the real $m$-dimensional torus
$T^m$ acts on $M$ effectively. Since we have a natural $S^1$-action on $K_M^{-1}$, the real $(m+1)$-dimensional torus
$T^{m+1}$ acts on $K^{-1}_M$ effectively so that $K_M^{-1}$ is also toric.

For $g \in T^{m+1}$, we put
$$ L(g) := \sum_{k=0}^\infty \mathrm{Tr}(g|_{H^0(M,K_M^{-k})}).$$
Because of Kodaira vanishing theorem we may regard $L(g)$ as a formal sum of the Lefschetz numbers.
We may analytically continue $L(g)$ to the algebraic torus $T_{\bfC}^{m+1}$, and write it as $L(\bfx)$
for an element $\bfx \in T_{\bfC}^{m+1}$.

Let $\{v_j \in \bfZ^m\}_j$ be the generators of the fan of $M$. Then the moment polytope of $M$ can be expressed as
$$P^{\ast} := \{w \in \bfR^m | v_j\cdot w \ge -1, \forall j\}.$$
 Let
 $$C^{\ast} \subset \bfR^{m+1} (= \mathrm{Lie}(T^{m+1}))^{\ast}$$
 be the cone over $P^{\ast}$.
The integral points in $C^{\ast}$ corresponds bijectively to
the set of all bases of $H^0(M,K_M^{-k})$ for all $k$.

For $\bfx \in T_{\bfC}^{m+1}$ and $\bfa = (w,k) \in \bfZ^{m+1} \cap C^{\ast}$, we put
$$ \bfx^{\bfa} = x_1^{a_1} \cdots x_{m+1}^{a_{m+1}}.$$

\begin{definition}\ \ The Hilbert series $\mathcal C(\bfx, C^{\ast})$ is defined by
$$\mathcal C(\bfx, C^{\ast}) := \sum_{\bfa \in C^{\ast}\cap \bfZ^{m+1}} \bfx^{\bfa}.$$
\end{definition}

The following fact is nontrivial, but is well-known in combinatorics.

\begin{fact}\label{rational}\ \ $\mathcal C(\bfx, C^{\ast})$ is a rational function of $\bfx$.
\end{fact}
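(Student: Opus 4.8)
The plan is to prove the more general fact that the lattice-point generating function of any pointed rational polyhedral cone is a rational function, and then apply it to $C^{\ast}$. First I would observe that $C^{\ast}$ is a strongly convex (pointed) rational polyhedral cone of full dimension $m+1$: since $P^{\ast}$ is a bounded polytope lying in the affine hyperplane $\{k=1\}$ and $C^{\ast}$ is the cone over it, $C^{\ast}$ is contained in the half-space $\{k \ge 0\}$ and meets $\{k=0\}$ only at the origin. In particular there is a linear functional strictly positive on $C^{\ast}\setminus\{\bfo\}$, so for $\bfx$ in a suitable nonempty open domain the series $\mathcal C(\bfx, C^{\ast})$ converges absolutely; rationality then means this holomorphic function equals a rational function.

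Next I would reduce to the simplicial case by triangulating. By Gordan's lemma the monoid $C^{\ast}\cap \bfZ^{m+1}$ is finitely generated, and the extremal rays of $C^{\ast}$ generate it as a cone; choosing a triangulation of $C^{\ast}$ into full-dimensional simplicial subcones $\sigma_1,\ldots,\sigma_N$ that meet only along common faces, I can reassemble $\mathcal C(\bfx, C^{\ast})$ from the pieces. To avoid the bookkeeping of overlaps I would pass to a half-open decomposition: one can shrink each $\sigma_i$ to a relatively half-open simplicial cone so that $C^{\ast}$ becomes the disjoint union of these half-open pieces, whence $\mathcal C(\bfx, C^{\ast})$ is simply the sum of their generating functions.

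The core computation is the simplicial case. If $\sigma$ is a simplicial cone generated by linearly independent integer vectors $u_1,\ldots,u_d$, then every lattice point of $\sigma$ is uniquely of the form $p + \sum_{i} n_i u_i$ with $n_i \in \bfZ$, $n_i \ge 0$, and $p$ in the finite set $\Pi \cap \bfZ^{m+1}$, where $\Pi=\{\sum_i t_i u_i : 0\le t_i <1\}$ is the fundamental (half-open) parallelepiped. Summing the geometric series in each $n_i$ gives
$$\sum_{\bfa\in\sigma\cap\bfZ^{m+1}} \bfx^{\bfa} = \frac{\sum_{p\in\Pi\cap\bfZ^{m+1}} \bfx^{p}}{\prod_{i=1}^{d}\left(1-\bfx^{u_i}\right)},$$
which is manifestly rational; the half-open variant yields the analogous formula for each piece of the decomposition. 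Since a finite sum of rational functions is rational, $\mathcal C(\bfx,C^{\ast})$ is rational.

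The step I expect to require the most care is producing the disjoint half-open triangulation (or, if one prefers inclusion--exclusion over the shared faces, controlling the alternating sum so that each lower-dimensional face is counted exactly once). Strong convexity of $C^{\ast}$ is precisely what makes such a decomposition and the attendant convergence legitimate, so I would invoke pointedness explicitly at that stage.
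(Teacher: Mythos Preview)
Your argument is correct and is precisely the standard proof of this rationality result (triangulate the pointed rational cone, use the fundamental parallelepiped formula for each simplicial piece, and sum). However, the paper does not actually give a proof of this Fact: it is stated without proof, introduced only by the remark that it ``is nontrivial, but is well-known in combinatorics.'' So there is nothing to compare your approach against; you have supplied what the paper deliberately omits, and your reasoning (pointedness of $C^{\ast}$, half-open simplicial decomposition, geometric-series computation) is the classical one.
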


It is easy to show the following lemma.

\begin{lemma}\ \ $\mathcal C(\bfx, C^{\ast}) = L(\bfx)$.
\end{lemma}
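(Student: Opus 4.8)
The plan is to prove the identity by matching the two expressions term by term, organized according to the value of the last coordinate $k$, which plays the role of the height in the cone $C^{\ast}$. Both sides are sums of Laurent monomials $\bfx^{\bfa}$ indexed by lattice points $\bfa \in \bfZ^{m+1}$, so it suffices to check that the same monomials occur, with the same (unit) coefficients, on each side; the resulting formal identity then upgrades to an identity of rational functions by Fact \ref{rational} together with the analytic continuation defining $L(\bfx)$.

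First I would invoke the standard toric description of the space of anti-pluricanonical sections. Because $M$ is toric with moment polytope $P^{\ast}$ for $K_M^{-1}$, for each $k \ge 0$ the space $H^0(M, K_M^{-k})$ decomposes under the algebraic torus $T^m_{\bfC} \subset \mathrm{Aut}(M)$ into one-dimensional weight spaces, one for each lattice point $w \in k P^{\ast} \cap \bfZ^m$, the weight being the character $x_1^{w_1}\cdots x_m^{w_m}$. I would then account for the extra $S^1$-factor: the fiberwise action on $K_M^{-k}$ acts on every section of $K_M^{-k}$ with weight $k$, contributing the factor $x_{m+1}^k$. Consequently, evaluating the trace at $g = \bfx \in T^{m+1}_{\bfC}$,
$$\mathrm{Tr}(\bfx|_{H^0(M,K_M^{-k})}) = \sum_{w \in kP^{\ast}\cap \bfZ^m} \bfx^{(w,k)}.$$

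Next I would carry out the slicing of the cone. Since $C^{\ast}$ is the cone over $P^{\ast}$ placed at height one, its lattice points at height $k$ are exactly the points $(w,k)$ with $w \in k P^{\ast}\cap \bfZ^m$ (for $k=0$ this gives only the apex $(0,0)$, matching $H^0(M,\mathcal O_M)=\bfC$). Partitioning $C^{\ast}\cap \bfZ^{m+1}$ by height therefore yields
$$\mathcal C(\bfx, C^{\ast}) = \sum_{k=0}^{\infty}\ \sum_{w \in kP^{\ast}\cap \bfZ^m} \bfx^{(w,k)} = \sum_{k=0}^{\infty} \mathrm{Tr}(\bfx|_{H^0(M,K_M^{-k})}) = L(\bfx),$$
which is the assertion.

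The step that needs the most care, and the only genuinely non-formal input, is the identification of the $T^{m+1}$-weights on $H^0(M,K_M^{-k})$ with the lattice points $(w,k)$ --- in particular pinning down the normalization of the fiber weight so that it is exactly $k$, and confirming that the $T^m_{\bfC}$-weights are the lattice points of $kP^{\ast}$ rather than of some translate. Once this dictionary is fixed, everything else is bookkeeping: the equality of the two series is a tautology after grouping by $k$, and the passage from the formal power series to the rational function $L(\bfx)$ is justified by convergence on the appropriate subset of $T^{m+1}_{\bfC}$ together with Fact \ref{rational}.
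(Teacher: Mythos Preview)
Your argument is correct and is exactly the approach the paper has in mind: immediately before the lemma the paper records that the integral points of $C^{\ast}$ correspond bijectively to the weight bases of $H^0(M,K_M^{-k})$ for all $k$, and then asserts the lemma is easy without giving further details. Your proof simply unpacks this bijection level-by-level in $k$, which is the intended (and essentially the only natural) route.
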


For $\bfb \in \bfR^{m+1} \cong \mathfrak g = \mathrm{Lie}(T^{m+1})$, put
$$e^{-t\bfb} := (e^{-b_1t},\cdots, e^{-b_{m+1} t}).$$
Then we have
$$\mathcal C(e^{-t\bfb}, C^{\ast}) = \sum_{\bfa \in C^{\ast}\cap \bfZ^{m+1}} e^{-t\bfa\cdot\bfb}.$$
This is a rational function in $t$ by Fact (\ref{rational}).
Let $P$ be the dual polytope of $P^{\ast}$, and put
$$C_{R} := \{(b_1, \cdots, b_m, m+1) | (b_1, \cdots, b_m) \in (m+1)P\} \subset \mathfrak g.$$
An intrinsic meaning of $C_{R}$ can be explained as follows. The unit circle bundle associated with $K_M$ is
considered as a Sasaki manifold with the regular Reeb vector field. But the Reeb vector field can be deformed in
$\mathfrak g$. The subset $C_{R}$ consists of those which are critical points for
the volume functional when we take the variation of the Reeb vector field to be constant multiple of the
Reeb vector field itself (see \cite{MSY2}).
In other words, $C_{R}$ is a natural
deformation space of the Reeb vector fields of the toric Sasaki manifold.

Put $\bfb = (0,\cdots,0,m+1)$.

\begin{theorem}[\cite{FOS08}]\ \
The coefficients of the Laurant series of the rational function
$\frac d{ds}|_{s=0}\mathcal C(e^{-t(\bfb + s\bfc)}, C^{\ast})$
in $t$ span the linear space spanned by
$\mathcal F_{\mathrm{Td}^1}, \cdots,\mathcal F_{\mathrm{Td}^m}$.
\end{theorem}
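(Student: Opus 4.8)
The plan is to reduce the statement, via the identification $\mathcal C(\bfx, C^{\ast}) = L(\bfx)$, to the equivariant index computation of weights that already underlies the proof of Theorem \ref{Futaki04}, and then to finish with an elementary generating-function bookkeeping. First I would make the $s$-derivative explicit. Writing $\bfa = (w,k)$ and using $\bfb = (0,\dots,0,m+1)$, so that $\bfa\cdot\bfb = (m+1)k$, termwise differentiation of the series (valid for $t$ in the region of convergence, hence as an identity of rational functions by Fact \ref{rational}) gives
\begin{equation}\label{eq:deriv}
\frac{d}{ds}\Big|_{s=0}\mathcal C(e^{-t(\bfb + s\bfc)}, C^{\ast}) = -t\sum_{k=0}^{\infty} \Big(\sum_{\bfa=(w,k)\in C^{\ast}\cap\bfZ^{m+1}} \bfa\cdot\bfc\Big)\, e^{-t(m+1)k}.
\end{equation}
Since the integral points of $C^{\ast}$ at height $k$ form a weight basis of $H^0(M, K_M^{-k})$, the number $\bfa\cdot\bfc$ is the weight of the corresponding basis vector under the one-parameter subgroup generated by $\bfc$. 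Hence the inner sum in (\ref{eq:deriv}) is exactly the total weight $w_k(\bfc)$ of this $\bfC^{\ast}$-action on $H^0(M,K_M^{-k})$, and (\ref{eq:deriv}) becomes $-t\sum_{k\ge 0} w_k(\bfc)\, q^k$ with $q = e^{-(m+1)t}$.

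Next I would invoke the equivariant index computation behind Theorem \ref{Futaki04}: $w_k(\bfc)$ is a polynomial in $k$ (genuinely polynomial for all $k$, since $M$ is a smooth toric Fano and the quantities in question are weighted lattice-point sums over a lattice polytope) whose coefficients are fixed nonzero multiples of the characters $\mathcal F_{\mathrm{Td}^p}(\bfc)$. The normalization (\ref{normalization}) makes the leading ($\mathcal F_{\mathrm{Td}^0}$) coefficient vanish, while $w_0(\bfc)=0$ removes the constant term, so that
\begin{equation}\label{eq:wk}
w_k(\bfc) = \sum_{p=1}^{m} \gamma_p\, \mathcal F_{\mathrm{Td}^p}(\bfc)\, k^{\,m+1-p}, \qquad \gamma_p \ne 0.
\end{equation}

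Finally I would convert (\ref{eq:wk}) into the Laurent expansion in $t$. Each $\sum_{k\ge 0} k^{\ell} q^k$ is rational in $q$ with a pole of order $\ell+1$ at $q=1$; substituting $q = e^{-(m+1)t}$ and multiplying by $-t$ turns the $p$-th summand of (\ref{eq:wk}) into a Laurent tail in $t$ whose pole has order exactly $m+1-p$. Thus the principal part of (\ref{eq:deriv}) is governed by a triangular system: the coefficient of $t^{-(m+1-p)}$ is a nonzero multiple of $\mathcal F_{\mathrm{Td}^p}(\bfc)$ plus a known combination of $\mathcal F_{\mathrm{Td}^1}(\bfc),\dots,\mathcal F_{\mathrm{Td}^{p-1}}(\bfc)$. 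Reading these coefficients as linear functionals of $\bfc$, the inclusion of the Laurent-coefficient span into the span of $\mathcal F_{\mathrm{Td}^1},\dots,\mathcal F_{\mathrm{Td}^m}$ is immediate from (\ref{eq:wk}), and the reverse inclusion follows by back-substituting down the triangular system, which expresses each $\mathcal F_{\mathrm{Td}^p}$ as a linear combination of Laurent coefficients; this is an identity of functionals, so it holds whether or not the $\mathcal F_{\mathrm{Td}^p}$ are linearly independent. This yields the asserted equality of spans.

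The main obstacle is the middle step (\ref{eq:wk}): matching the polynomial coefficients of the weight $w_k$ with the integral invariants $\mathcal F_{\mathrm{Td}^p}$. This is precisely the equivariant Riemann--Roch/localization computation of \cite{futaki04-1}, and the delicate point is checking that the normalization (\ref{normalization}) is exactly what kills the leading term and aligns the indexing so that only $\mathcal F_{\mathrm{Td}^1},\dots,\mathcal F_{\mathrm{Td}^m}$ survive. One should also note that these characters vanish on the semisimple part of the reductive algebra $\mathfrak h_0$, so that comparing them with the Laurent coefficients on the torus algebra $\mathfrak g$ loses no information; the remaining generating-function bookkeeping with $\sum_k k^{\ell}q^k$ and the triangular recovery is then routine.
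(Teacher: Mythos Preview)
The paper is a survey and does not give a proof of this theorem; it is simply quoted from \cite{FOS08}. So there is no ``paper's own proof'' to compare against here. That said, your argument is the natural one and is in the spirit of what the survey itself sketches around Theorem~\ref{Futaki04}: identify $\mathcal C$ with the Lefschetz/index generating function, differentiate to pull out the weight polynomial $w_k$, invoke equivariant Riemann--Roch to write the coefficients of $w_k$ as the $\mathcal F_{\mathrm{Td}^p}$, and then read off the Laurent coefficients via the standard $\sum_k k^\ell q^k$ expansion. The triangular recovery at the end is exactly right and cleanly gives both inclusions of spans.

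One point deserves a sharper statement. Your equation~(\ref{eq:wk}) presupposes that the leading coefficient $b_0(\bfc)=\frac1{m!}\int_M u_X\omega^m$ vanishes, which you attribute to the normalization~(\ref{normalization}). In the toric cone picture, $\bfc$ lives in $\mathfrak g\cong\bfR^{m+1}$, and the $(m{+}1)$-st component encodes the choice of lift of the $\bfC^\ast$-action to $L$; changing it shifts $u_X$ by a constant. The normalization is therefore not automatic for arbitrary $\bfc\in\bfR^{m+1}$: it singles out the hyperplane of $\bfc$ tangent to $C_R$ (last coordinate zero), which is also where the characters $\mathcal F_{\mathrm{Td}^p}$ genuinely live as functionals on the Lie algebra of the torus acting on $M$. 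You should say explicitly that $\bfc$ is taken in this hyperplane (equivalently, that the lift is chosen so that~(\ref{normalization}) holds); otherwise the Laurent series acquires an extra leading term proportional to $b_0(\bfc)$ that is not in the span of $\mathcal F_{\mathrm{Td}^1},\dots,\mathcal F_{\mathrm{Td}^m}$. With that clarification, your proof is correct.
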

\noindent
This theorem is a generalization of a result of Martelli, Sparks and Yau \cite{MSY2}, which says the classical Futaki
invariant is obtained as a derivative of the Hilbert series.
Our computations show that the question is
closely related to a question raised by
Batyrev and Selivanova:
Is a toric Fano manifold with vanishing $f (= \mathcal F_{\mathrm{Td}^1})$ for
the anticanonical class necessarily symmetric?
Recall that a toric Fano manifold $M$ is said to be symmetric
if the trivial character is the only fixed point of the action of
the Weyl group on the space of all algebraic characters of
the maximal torus in  $\mathrm{Aut}(M)$. The question of Batyrev and Selivanova is natural because
it is proved by Batyrev and Selivanova \cite{batyrev-selivanova} that if a toric Fano manifold is
symmetric then there exists a K\"ahler-Einstein metric. Later Wang and Zhu \cite{Wang-Zhu} proved
that a toric Fano manifold admits a K\"ahler-Einstein metric if and only if $f (= \mathcal F_{\mathrm{Td}^1})$
vanishes.

Nill and Paffenholz \cite{NillPaffen} gave a counterexample to the question of Batyrev-Selyvanova.
Namely they gave an example of a non-symmetric seven dimensional toric K\"ahler-Einstein Fano manifold on which we have $\mathcal F_{\mathrm{Td}^1}=0$.
Ono, Sano and Yotsutani showed that, in this example, other $\mathcal F_{\mathrm{Td}^i}$'s are non-zero and all proportional.

\section{Higher integral invariants and higher CM lines}

The invariant $\mathcal F_{\mathrm{Td}^1}$ is considered as the Mumford weight of the CM line $\lambda_{CM}$ on
the Hilbert scheme $\mathcal H$ of subschemes of $\bfP^N$
with Hilbert polynomial $\chi$
as shown by Paul and Tian \cite{paultianCM1}, \cite{paultianCM2}. Recently Della Vedova and Zuddas showed
that the same is true for higher $\mathcal F_{\mathrm{Td}^i}$'s. This section is based on their paper \cite{DVZ10}.

Let $(M,L)$ be an $m$-dimensional polarized variety or scheme. For a one parameter
subgroup $\rho : \bfC^{\ast} \to \mathrm{Aut}(M,L)$ with a lifting
to an action $\tilde\rho : \bfC^{\ast} \to \mathrm{Aut}(L)$ on $L$ we denote by
$w(M,L)$ the weight of the induced action on the determinant line $\otimes_{i=0}^m (\det H^i(M,L))^{(-1)^i}$, and by $\chi(M,L)$
the Euler-Poincare characteristic $\sum_{i=0}^m (-1)^i \dim H^i(M,L)$.
Of course if we replace $L$ by its sufficiently high power we may assume
$H^i(M,L) = 0$ for $i > 0$. It is known by the general theory that we have polynomial expansions
\begin{equation}\label{chi}
\chi(M,L^k) = a_0(M,L)k^m + a_1(M,L)k^{m-1} + \cdots + a_m(M,L),
\end{equation}
\begin{equation}\label{weight}
w(M,L^k) = b_0(M,L)k^{m+1} + b_1(M,L)k^m + \cdots + b_{m+1}(M,L).
\end{equation}

We define the Chow weight $ \mathrm{Chow}(M,L^k)$ of $(M,L^k)$ by
$$ \mathrm{Chow}(M,L^k) = \frac{w(M,L^k)}{k\chi(M,L^k)} - \frac{b_0(M,L)}{a_0(M,L)} .$$
One easily gets
\begin{eqnarray*}
\mathrm{Chow}(M,L^k) &=& \frac{b_{m+1}(M,L)}{k\chi(M,L^k)} \\
&& + \frac{a_0(M,L)}{k\chi(M,L^k)} \sum_{\ell=1}^m
\frac{a_0(M,L)b_\ell(M,L)-b_0(M,L)a_\ell(M,L)}{a_0(M,L)^2} k^{m+1- \ell}.
\end{eqnarray*}
The first term $b_{m+1}$ is known to vanish in the smooth case, see \cite{futaki88}. We then define $F_\ell(M,L)$ by
$$ F_\ell(M,L) = \frac {a_0(M,L)b_\ell(M,L) - b_0(M,L)a_\ell(M,L)}{a_0(M,L)^2}. $$
If $M$ is smooth, $\chi(M,L)$ is expressed using Todd classes and $c_1(L)$ by Riemann-Roch theorem and $w(M,L)$ is expressed using
Todd classes, $c_1(L)$, connections in the tangent bundle of $M$ and $L$ with the infinitesimal action of $X$.
The connection term in $L$ makes its appearance as the
Hamiltonian function $u_X$ in Definition \ref{character} of $\mathcal F_\phi(X)$. Hence the terms $a_i(M,L)$ and $b_j(M,L)$ are written
in terms those classes and connections. Della Vedova and Zuddas show that $F_\ell(M,L) $ is independent of the choice of a lifting
$\tilde\rho : \bfC^{\ast} \to \mathrm{Aut}(L)$ of $\rho$ and that
\begin{equation}\label{DVZformula}
F_\ell(M,L) = \frac{1}{vol(M, L)}\mathcal F_{\mathrm{Td}^\ell}(X)
\end{equation}
when $M$ is smooth and $X$ is the infinitesimal generator of the action $\rho : \bfC^{\ast} \to \mathrm{Aut}(M,L)$.
We give here the case when $\ell = 1$. Refer to \cite{DVZ10} for general $\ell$.

\begin{lemma}[\cite{donaldson02}]\ \ If  $M$  is a nonsingular projective variety then
$$ F_1(M,L) = \frac{1}{vol(M, L)}\mathcal F_{\mathrm{Td}^1}(X) $$
where $X$ is the infinitesimal generator of the $\bfC^{\ast}$-action.
\end{lemma}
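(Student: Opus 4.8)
The plan is to prove the equality $F_1(M,L) = \frac{1}{vol(M,L)}\mathcal{F}_{\mathrm{Td}^1}(X)$ by computing both sides through the equivariant Riemann-Roch theorem and matching the leading coefficients of the weight and dimension polynomials. The key observation is that $F_1$ is built from $a_0, a_1, b_0, b_1$, so I only need the top two coefficients of each of the expansions \eqref{chi} and \eqref{weight}, and these are exactly the terms that the Hirzebruch-Riemann-Roch formula produces most transparently.

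First I would write $\chi(M,L^k) = \int_M \mathrm{ch}(L^k)\,\mathrm{Td}(M)$, so that
$$ a_0(M,L) = \frac{1}{m!}\int_M c_1(L)^m = vol(M,L), \qquad a_1(M,L) = \frac{1}{2(m-1)!}\int_M c_1(M)\wedge c_1(L)^{m-1}. $$
Next, the crucial step is to express the weight $w(M,L^k)$ equivariantly. Lifting the $\bfC^\ast$-action to $L$ and writing $X$ for its generator, the equivariant index theorem gives $w(M,L^k)$ as an integral over $M$ of the equivariant Todd class times the equivariant Chern character of $L^k$, where the equivariant correction in $L$ appears precisely as the Hamiltonian $u_X$ of Definition \ref{character}, as noted in the excerpt. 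Extracting the two leading coefficients yields
$$ b_0(M,L) = \frac{1}{(m+1)!}\int_M u_X\,\omega^m, \qquad b_1(M,L) = \frac{1}{m!}\int_M \Big( u_X\,\mathrm{Td}^1(\Theta)\wedge\omega^{m-1} + (\text{divergence term})\Big), $$
up to normalizations, where $\omega$ represents $c_1(L)$ and $\Theta$ is the curvature as in Definition \ref{character}.

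Then I would form the combination $a_0 b_1 - b_0 a_1$ defining $F_1$ and compare it with $\mathcal{F}_{\mathrm{Td}^1}(X)$. Here the normalization \eqref{normalization}, $\int_M u_X\,\omega^m = 0$, does the decisive work: it forces $b_0(M,L) = 0$, so that $F_1(M,L) = b_1(M,L)/a_0(M,L)$ reduces to a single integral. With $\mathrm{Td}^1 = \tfrac12 c_1$ and the divergence term vanishing by the divergence theorem (exactly as in the outline proof of Theorem \ref{Futaki04}(b)), the numerator $b_1$ becomes proportional to $\int_M u_X\,c_1\wedge\omega^{m-1}$, which is $\frac{1}{vol(M,L)}$ times $\mathcal{F}_{\mathrm{Td}^1}(X)$ after dividing by $a_0 = vol(M,L)$.

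\textbf{The main obstacle} I anticipate is bookkeeping in the equivariant Riemann-Roch step: one must be careful that the choice of lift $\tilde\rho$ only shifts $u_X$ by an additive constant, and verify that this shift drops out. This is where the normalization \eqref{normalization} and the fact that $b_0 = 0$ are essential, since they are precisely what guarantees that $F_1$ is lift-independent (as asserted just before the lemma). Once the lift-independence is in hand, matching the two expressions is a direct comparison of the integrands, and the factor $1/vol(M,L) = 1/a_0$ emerges automatically from the definition of $F_1$.
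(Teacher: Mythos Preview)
Your proposal is correct and follows essentially the same route as the paper: compute $a_0,a_1$ by Riemann--Roch and $b_0,b_1$ by equivariant Riemann--Roch, drop the divergence term in $b_1$, and assemble $F_1=(a_0b_1-a_1b_0)/a_0^2$. The only cosmetic difference is that the paper keeps $b_0$ general and cancels it against $a_1$ to produce $\int_M u_X(\sigma-\bar\sigma)\,\omega^m$, whereas you invoke the normalization \eqref{normalization} up front to set $b_0=0$; both yield the same integral identified with $\mathcal F_{\mathrm{Td}^1}(X)$.
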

\begin{proof}\ \ Let us denote by $m$ the complex dimension of $M$.
Expand $h^0(L^k)$ and $w(k)$ as
$$ h^0(L^k) = a_0k^m + a_1k^{m-1}+ \cdots,$$
$$ w(k) = b_0k^{m+1} + b_1k^m + \cdots.$$
Then by the Riemann-Roch and the equivariant Riemann-Roch formulae
$$ a_0 = \frac 1{m!}\int_M c_1(L)^m = vol(M), $$
$$ a_1 = \frac 1{2(m-1)!} \int_M \rho \wedge c_1(L)^{m-1} = \frac 1{2m!} \int_M \sigma \omega^m, $$
$$ b_0 = \frac 1{(m+1)!}\int_M (m+1) u_X \omega^m, $$
$$ b_1 = \frac 1{m!} \int_M m u_X \omega^{m-1} \wedge \frac 12 c_1(M) + \frac 1{m!} \int_M
\operatorname{div}X\ \omega^m .$$
The last term of the previous integral is zero because of the divergence formula. Thus
$$ \frac {w(k)}{kh^0(k)} = \frac{b_0}{a_0}(1 + (\frac{b_1}{b_0} - \frac{a_1}{a_0})k^{-1} + \cdots ) $$
from which we have
\begin{eqnarray*}
 F_1(M,L) &=&
 \frac{b_0}{a_0}(\frac{b_1}{b_0} - \frac{a_1}{a_0}) = \frac 1{a_0^2}(a_0b_1 - a_1b_0)\\
 &=& \frac 1{2vol(M, L)}\int_M u_X(\sigma - \frac 1{vol(M, L)}\int_M \sigma \frac{\omega^n}{n!})\frac{\omega^n}{n!}\\
 &=&  \frac{1}{vol(M, L)}\mathcal F_{\mathrm{Td}^1}(X)
\end{eqnarray*}
\end{proof}

\begin{definition}\label{DF} Let $(M,L)$ be a polarized scheme.
We call $F_1(M,L)$ the DF-invariant of $(M,L)$.
\end{definition}

The DF-invariant $F_1(M,L)$ is used as a numerical invariant to define K-stability, see next section for the detail. The idea is the similar to the following
Hilbert-Mumford criterion for Chow stability.


Let $f : \mathcal U \to \mathcal H$ be the universal flat family over the Hilbert scheme $\mathcal H$ of subschemes of $\bfP^N$
with Hilbert polynomial $\chi$, and $\iota : \mathcal U \to \mathcal H \times \bfC\bfP^N$ be
the natural embedding. Then we have $f = \mathrm{pr}_{\mathcal H}\circ \iota$. Let $\mathcal L = \iota^\ast\circ \mathrm{pr}_{\mathcal H}^{\ast}
\mathcal O(1)$ be the relatively ample line bundle over $\mathcal U$. For $k$ sufficiently large we have
$\mathrm{rank} f_\ast (L^k) = \dim H^0(\mathcal U_x, \mathcal L_x^k)$ and
$\det f_\ast(L^k) = \det H^0(\mathcal U_x, \mathcal L_x^k)$
for all $x \in \mathcal H$. Hence we have
\begin{equation}\label{rank}
\mathrm{rank} f_\ast (L^k) = a_0 k^n + a_1 k^{n-1} + \cdots + a_n.
\end{equation}
Considering the determinant we see from \cite{KnudsenMumford} that there are $\bfQ$-line bundles $\mu_0, \cdots, \mu_{m+1}$ such that
\begin{equation}\label{det}
\mathrm{det} f_\ast (L^k) = \mu_0^{k^{m+1}}\otimes \mu_1^{k^m} \otimes \cdots \otimes \mu_{m+1}.
\end{equation}

By definition {\it Chow-line} is the $\bfQ$-line bundle $\lambda_{Chow} (\mathcal H, \mathcal L)$ over $\mathcal H$
\begin{equation}\label{Chow-line}
\lambda_{Chow} (\mathcal H, \mathcal L) = \det f_\ast(\mathcal L) ^{\frac 1{k\mathrm{rank}f_\ast(\mathcal L)} }\otimes \mu_0 ^{- \frac 1 {a_0}}.
\end{equation}
It is easy to see that $ \mathrm{Chow}(M,L)$ is the Mumford weight of the Chow-line $\lambda_{Chow} (\mathcal H, \mathcal L)$.
By (\ref{rank}) and (\ref{det}) one can show
\begin{equation}\label{Chow-line2}
\lambda_{Chow} (\mathcal H, \mathcal L) = \mu_{m+1}^{\frac 1{k\chi(k)}} \otimes \left(\bigotimes_{\ell = 1}^m \left(\mu_\ell^{\frac1{a_0}} \otimes
\mu_0^{-\frac{a_\ell}{a_0^2}}\right)^{\frac{a_0k^{m+1-\ell}}{\chi(k)}}\right).
\end{equation}
We define the $\ell$-th CM-line $ \lambda_{\mathrm{CM}, \ell}(\mathcal H, \mathcal L) $ on the Hilbert scheme $\mathcal H$ by
$$ \lambda_{\mathrm{CM}, \ell}(\mathcal H, \mathcal L) = \mu_\ell^{\frac 1{a_0}} \otimes \mu_0^{- \frac {a_\ell}{a_0^2}}.$$
It is also easy to see that $F_\ell(M,L)$ is the weight of the  $\ell$-th CM-line $ \lambda_{\mathrm{CM}, \ell}(\mathcal H, \mathcal L) $.

Della Vedova and Zuddas then compute $ \mathrm{Chow}(M,L)$ and $F_\ell(M,L)$ for projective bundles over curves and for polarized manifolds
blown-up at finite points.

Let $\Sigma$ be a genus $g$ smooth curve and $E$ a rank $n \ge 2$ vector bundle over $\Sigma$. Let $M = \bfP(E)$ be the projective bundle
associated to $E$ and  denote by $\pi : M \to \Sigma$ the projection. A line bundle $L$ on $M$ is the form $L = \mathcal O_{\bfP(E)}(r) \otimes \pi^\ast B$
where $B$ is a line bundle over $\Sigma$. We assume that $L$ is ample. We also assume that $E$ is decomposed as $E = E_1 \oplus \cdots \oplus E_s$
into indecomposable components $E_i$, and that we are given a $\bfC^\ast$ action on $E$ written in terms of this decomposition
$$ t\cdot (e_1, \cdots, e_s) = (t^{\lambda_1}e_1, \cdots, t^{\lambda_s} e_s). $$
In this situation $\mathrm{Chow}(M,L^k)$ is given by
\begin{eqnarray}\label{Chow-bundle}
\mathrm{Chow}(M,L^k) &=& \frac{\binom{m-1+kr}{m}}{m+1} \frac{\chi(\Sigma, \det(E\otimes B^{-\frac1r})}{\mu(E\otimes B^{-\frac1r})\chi (\Sigma, S^{kr}(E^\ast\otimes B^{\frac1r}))} \\
& & \qquad \cdot\sum_{j=1}^s \lambda_j \mathrm{rank}(E_j)(\mu(E_j) - \mu(E)) \nonumber
\end{eqnarray}
where $\mu(F) = \mathrm{deg}(F) / \mathrm{rank}(F)$ is the slope of the bundle $F$. On the other hand $F_\ell(M,L)$ is computed for some positive rational number
depending only on $m$ as
\begin{equation}\label{F-ell-bundle}
F_\ell(M,L^k) = - C_\ell\ \frac{\chi(\Sigma, \det(E\otimes B^{-\frac1r}))}{\mu(E\otimes B^{-\frac1r})^2} \sum_{j=1}^s \lambda_j \mathrm{rank}(E_j)(\mu(E_j) - \mu(E)).
\end{equation}
By (\ref{Chow-bundle}) and (\ref{F-ell-bundle}) we see that $F_\ell(M,L^k) $ are proportional for all $\ell$, that they vanish if and only if $\mu(E_j) = \mu(E)$ for
all $j = 1, \cdots, s$, and that $\mathrm{Chow}(M,L^k) = 0$ if and only if $F_\ell(M,L^k) = 0$ for some (and hence any) $\ell$.

The slope stability of $E$ is related to the existence of cscK metric as in the following theorem.
\begin{theorem}[\cite{ACGT0905}]\label{ACGTtheorem}
A projective bundle $\bfP(E)$ over a smooth curve of genus $g \ge 2$ admits a K\"ahler metric of constant scalar curvature in some (and hence any)
K\"ahler class if and only if $E$ is slope polystable.
\end{theorem}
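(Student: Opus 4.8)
The plan is to prove the two implications separately: on the necessity side I would use the chain ``cscK $\Rightarrow$ K-polystability'' recorded earlier in the paper (\cite{chentian04}, \cite{donaldson05}, \cite{stoppa0803}, \cite{mabuchi0910}) together with the explicit invariant computation (\ref{F-ell-bundle}), and on the sufficiency side an explicit construction of the metric built from the Narasimhan--Seshadri theorem and the momentum (Calabi ansatz) construction. Throughout I fix $\pi : M = \bfP(E) \to \Sigma$ and write $n = \mathrm{rank}(E)$, so that $m = \dim_{\bfC} M = n$, and I use the decomposition $E = E_1 \oplus \cdots \oplus E_s$ into indecomposable summands as in the setup preceding (\ref{Chow-bundle}).

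For the necessity direction, suppose $M$ carries a cscK metric in some Kähler class. Since $\mathrm{Aut}(M,L)$ need not be discrete (it contains the torus generated by the scalings of the $E_j$), I would invoke the polystable form of the Yau--Tian--Donaldson implication, so that existence forces $(M,L)$ to be K-polystable. I would then test K-polystability against two families of concrete test configurations: the product configurations coming from the $\bfC^{\ast}$-actions $t\cdot(e_1,\dots,e_s) = (t^{\lambda_1}e_1, \dots, t^{\lambda_s}e_s)$, and the deformations to the normal cone of the sub-ruled-variety $\bfP(F) \subset \bfP(E)$ attached to a saturated sub-bundle $F \subset E$. For the product configurations the Donaldson--Futaki invariant is exactly the DF-invariant $F_1(M,L)$ of Definition \ref{DF}, which by (\ref{F-ell-bundle}) is a negative multiple of $\sum_j \lambda_j \,\mathrm{rank}(E_j)(\mu(E_j) - \mu(E))$, while for the normal-cone configurations it is given by the analogous slope formula. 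A sub-bundle with $\mu(F) > \mu(E)$ produces, after the standard normalisation, a strictly destabilising sign; hence K-polystability forces $\mu(F) \le \mu(E)$ for every saturated $F$, i.e.\ slope semistability, and the vanishing of $F_1$ on the torus directions together with the closedness of the orbit upgrades semistability to polystability by splitting off each Jordan--Hölder factor as a genuine direct summand.

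For the sufficiency direction, assume $E$ is slope polystable. Over a curve a polystable bundle admits, by the Narasimhan--Seshadri theorem, a Hermitian--Einstein metric $h$; since $c_2 = 0$ on a curve, the trace-free part of its curvature vanishes, so the induced connection on $\bfP(E)$ is \emph{projectively flat}. Consequently $M = \bfP(E)$ is a flat $\CP^{n-1}$-bundle, realised as $\bfH \times_{\pi_1(\Sigma)} \CP^{n-1}$, where the universal cover of $\Sigma$ is the upper half-plane $\bfH$ (this is where $g \ge 2$ enters) and $\pi_1(\Sigma)$ acts through a unitary representation $\pi_1(\Sigma) \to \mathrm{PU}(n)$. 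Equipping $\bfH$ with its hyperbolic metric of constant negative curvature and $\CP^{n-1}$ with the Fubini--Study metric, the product metric is cscK and invariant under the isometric $\pi_1(\Sigma)$-action, so it descends to a cscK metric on $M$ in the natural product Kähler class.

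The remaining, and hardest, step is to pass from this single class to \emph{every} Kähler class, which is the genuine content of Theorem \ref{ACGTtheorem}. For this I would use the admissible metrics: writing a Kähler metric fibrewise as Fubini--Study with a momentum profile $\Theta(\tau)$ over $\Sigma$, the cscK equation reduces to a second-order ODE for $\Theta$ on a fixed interval with boundary conditions at the endpoints, and the existence of an admissible cscK metric becomes equivalent to the positivity of the resulting solution. One then checks that this positivity holds precisely when the slopes are balanced, $\mu(E_j) = \mu(E)$ for all $j$, which is exactly the polystability hypothesis and precisely the vanishing condition isolated by (\ref{F-ell-bundle}). The main obstacle is therefore analytic rather than algebraic: verifying that the boundary-value problem for $\Theta$ admits a positive solution in all Kähler classes when $E$ is polystable, and confirming that the solution degenerates --- so that one obtains only an extremal, non-constant-scalar-curvature metric --- as soon as some $\mu(E_j) \neq \mu(E)$. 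Establishing this last dichotomy with care, in particular ruling out cscK for strictly semistable, non-polystable $E$ whose Jordan--Hölder filtration does not split, is where the argument is most delicate.
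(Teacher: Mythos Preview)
The paper does not prove Theorem \ref{ACGTtheorem}; it is simply quoted from \cite{ACGT0905} as background for the slope formulas (\ref{Chow-bundle}) and (\ref{F-ell-bundle}), and no argument for it appears anywhere in the text. There is therefore no ``paper's own proof'' to compare against.

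On the merits of your sketch: the sufficiency half is essentially the route taken in \cite{ACGT0905} --- Narasimhan--Seshadri gives a projectively flat structure, the locally symmetric product metric handles one class, and the admissible/momentum-profile ODE handles the rest --- and you correctly identify the positivity of the profile as the analytic crux. Your necessity argument, however, has a soft spot. Invoking ``cscK $\Rightarrow$ K-polystable'' and testing against the torus product configurations only yields $\mu(E_j)=\mu(E)$ for each \emph{indecomposable} summand $E_j$; it says nothing about whether those summands are themselves stable. The sentence ``closedness of the orbit upgrades semistability to polystability by splitting off each Jordan--H\"older factor'' hides the real work: for an indecomposable, strictly semistable $E$ one has $\mathrm{Aut}(E)=\bfC^{\ast}$, so $\mathrm{Aut}(\bfP(E))$ is discrete and the Matsushima--Lichnerowicz obstruction does not bite. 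What is actually needed is to exhibit the test configuration coming from the degeneration $E \rightsquigarrow \mathrm{gr}(E)$ to the associated graded, observe via (\ref{F-ell-bundle}) that its DF-invariant vanishes (all Jordan--H\"older factors share the same slope), and argue that it is \emph{not} a product configuration because $\bfP(E)\not\cong\bfP(\mathrm{gr}(E))$ over a curve of genus $g\ge 2$. Without this step your necessity direction does not exclude the indecomposable strictly semistable case.
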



We will not reproduce the formulas of $ \mathrm{Chow}(M,L)$ and $F_\ell(M,L)$ for polarized manifolds obtained by
blowing-up at finite points, but the consequences of the formulas are summarized as follows.
By a result of LeBrun and Simanca \cite{lebrunsimanca93} the cone $\mathcal E$ of extremal K\"ahler classes is open in the K\"ahler cone,
and the locus where the Futaki invariant $F_1$ vanishes is the set $\mathcal C$ of all cscK classes. By the results of Arezzo and Pacard \cite{arezzopacard06},  \cite{arezzopacard09} there is a non-empty open set of cscK classes under mild conditions. Under such conditions we may be able to show
that the locus  $\mathcal Z$ where $F_2 = \cdots = F_m = 0$ is a Zariski closed subset in $\mathcal C$. Then a rational point
in $\mathcal C \backslash \mathcal Z$ will be a cscK but asymptotically unstable polarization. This idea works for the blow-up of $\bfC\bfP^2$ at
four points with all but one aligned. See \cite{DVZ10} for the detail.

\section{Toric case}

In this section we compare H.Ono's paper \cite{Ono10-1} with the work of Della Vedova and Zuddas \cite{DVZ10}.
Let $\Delta \subset \bfR^m$ be an $m$-dimensional integral Delzant polytope. Namely, \\
(i) $\Delta$ has integral vertices ${\bf w}_1, \cdots, {\bf w}_d$, \\
(ii) $m$ edges of $\Delta$ emanate from each vertex ${\bf w}_i$, and \\
(iii) primitive vectors along those edges generate the lattice $\bfZ^m \subset \bfR^m$. \\
To a Delzant polytope there correspond a nonsingular toric variety and an
ample line bundle $L$.
The Ehrhart polynomial of $\Delta$
\begin{equation}\label{Ehrhart}
E_P(k) = \mathrm{Vol}(\Delta)k^n + \sum_{j=0}^{m-1} E_{P,j}k^j
\end{equation}
has the property that
$$E_P(i) = \sharp (iP\cap\bfZ^m).$$
It is also known that there exists an $\bfR^m$-valued polynomial
\begin{equation}\label{s(k)}
{\bf s}_\Delta(k) = k^{n+1} \int_\Delta \bfx\,dv + \sum_{j=1}^m k^j\,\bfs_{\Delta, j}
\end{equation}
such that
\begin{equation}
\bfs_\Delta (i) = \sum_{\bfa \in i\Delta \cap \bfZ^m} \bfa.
\end{equation}
Then Ono \cite{Ono10-1} proves that if, for each $i$, $(M_\Delta, L_{\Delta}^i)$ is (not necessarily asymptotically) Chow semistable, we have
\begin{equation}\label{Onoformula}
{\bf s}_\Delta (i) = \frac{E_\Delta(i)}{\mathrm{Vol}(i\Delta)}\int_{i\Delta} \bfx\,dv.
\end{equation}
Hence if $(M_\Delta, L_{\Delta})$ is asymptotically Chow semistable, we have the equality
\begin{equation}\label{Onoformula2}
\mathrm{Vol}(\Delta){\bf s}_\Delta (k) - kE_\Delta(k)\int_{\Delta} \bfx\,dv =
\sum_{j=0}^m k^j \left(\mathrm{Vol}(\Delta)\bfs_{\Delta,j} - E_{\Delta,j-1}\int_\Delta \bfx \,dv\right)
= 0
\end{equation}
as a polynomial in $k$. But the Ehrhart polynomial is equal to the Hilbert polynomial $\chi(M_\Delta, L_{\Delta}^k)$.
Moreover, ${\bf s}_\Delta (k)$ can be regarded as a character
of the torus and gives the weight $w(M_\Delta, L_\Delta^k)$ on
$L_{\Delta}^k$ when restricted to a one parameter subgroup. Therefore, as a character,
$$\mathrm{Vol}(k\Delta){\bf s}_\Delta (k) - kE_\Delta(k)\int_{\Delta} \bfx\,dv$$
is equal to
$$\mathrm{Vol}(\Delta)w(M_\Delta, L_\Delta^k) - k\chi(M_\Delta, L_{\Delta}^k)\int_{M_\Delta} u_X \omega^m$$
when restricted to the one parameter group generated by an infinitesimal generator $X$.
Put
\begin{equation}\label{Onoformula3}
\mathcal{F}_{\Delta,j} := \mathrm{Vol}(\Delta) \bfs_{\Delta,j} - E_{\Delta, j-1}\int_\Delta\,dv \in \bfR^m.
\end{equation}
By (\ref{Onoformula2}), $\mathcal{F}_{\Delta,j}$ vanishes if $(M_\Delta, L_{\Delta}^i)$ is Chow semistable.
But (\ref{DVZformula}) shows
\begin{equation}\label{Onoconjecture}
\mathrm{Lin}_\bfC\{\mathcal F_{\Delta,j},\ j=1, \cdots, m \} = \mathrm{Lin}_\bfC\{\mathcal F_{\mathrm{Td}^{(p)}}|_{\bfC^m},\ p=1,\cdots, m\}
\end{equation}
where $\mathrm{Lin}_\bfC$ stands for the linear hull in $\bfC^m$. This gives a proof to Conjecture 1.6 in \cite{Ono10-1}.

In \cite{Ono10-2}, Ono further gives a necessary and sufficient condition for Chow semistability condition for $(M_\Delta, L_\Delta^i)$
in terms of toric data. Shelukhin \cite{Shelukhin09} also expresses $\mathcal F_1(M,-K_M)$ for a toric Fano manifold $M$ in terms of toric data
of $M$.

\section{K stability}

The notion of K-stability was first introduced by Tian in \cite{tian97}
for Fano manifolds
and proved that if a Fano manifold carries a K\"ahler-Einstein metric then
$M$ is weakly K-stable. Tian's K-stability considers the degenerations of
$M$ to
normal varieties and uses a generalized version of the invariant $\mathcal F_1$
which were defined for normal varieties.
Donaldson re-defined in \cite{donaldson02} the invariant $\mathcal F_1$
for general
polarized varieties (or even projective schemes) as introduced in the previous section, and also re-defined
the notion of K-stability for a polarized manifold $(M, L)$.

For a polarized variety $(M, L)$, a test configuration of
degree $r$ consists of the following.\\
(a)\ \ A flat family of schemes $\pi : {\mathcal M} \to \bfC$:\\
(b)\ \ $\bfC^*$-action on ${\mathcal M}$ such that $\pi : {\mathcal M} \to \bfC$ is $\bfC^\ast$-equivariant with respect to the usual $\bfC^*$-action on $\bfC$:\\
(c)\ \ $\bfC^*$-equivariant relatively ample line bundle $\mathcal{L} \to {\mathcal M}$ such that
for $t \ne 0$ one has $M_t = \pi^{-1}(t) \cong M$ and
$(M_t, {\mathcal L}|_{M_t}) \cong (M, L^r)$.

$\bfC^*$-action on $(\mathcal M, \mathcal L)$ induces a $\bfC^*$-action on the central fiber
$L_0 \to M_0 = \pi^{-1}(0)$. Moreover if
$(M,L)$ admits a $\bfC^*$-action, then one obtains a test configuration
by taking the direct product $L^r \times \bfC \to M \times \bfC$. This is called a product configuration.
A product configuration endowed with the trivial $\bfC^\ast$ action is called the trivial configuration.

\begin{definition}Let $(M,L)$ be a polarized variety, and $(\mathcal M, \mathcal L)$ a test configuration of
$(M,L)$. We define DF-invariant $DF(\mathcal M, \mathcal L)$ to be the DF-invariant $F_1(M_0,L_0)$ of the central fiber $(M_0, L_0)$.
\end{definition}

\begin{definition}A polarized variety $(M,L)$ is said to be  K-polystable (resp. stable) if
the DF-invariant $DF(\mathcal M, \mathcal L)$ is negative or equal to zero
for all test configurations $(\mathcal M, \mathcal L)$, and the equality occurs only
if the test configuration is product (resp. trivial).
\end{definition}

\noindent
{\bf Conjecture}(\cite{donaldson02}) : Let $(M,L)$ be a nonsingular polarized variety. Then a K\"ahler metric of constant scalar curvature will exist in the
K\"ahler class $c_1(L)$  if and only if $(M,L)$ is K-polystable.

\bigskip

Let us recall the following general terminology. Let $V$ be a vector space over $\bfC$ and $\rho$ a one parameter
subgroup of $SL(V)$. Let $[v] \in \bfP(V)$ and $\lambda \in \bfC^{\ast}$.
Suppose $[\rho(\lambda)v] \to [v_0] \in \bfP(V)$ as $ \lambda \to 0$. Then
we have an endomorphism $\rho(\lambda):\bfC v_0 \to \bfC v_0$.
The weight of this endomorphism is called Mumford weight of $(v,\rho)$ and
is denoted by $\mu(v,\rho)$. We say that $[v] \in \bfP(V)$ is semistable (resp.
stable) with respect to $\rho$ iff $\mu(v,\rho) \le 0$
(resp. $\mu(v,\rho) < 0$). We also say that $[v] \in \bfP(V)$ is polystable
iff  $\mu(v,\rho) < 0$ or $\rho(\bfC^{\ast})$ is contained in $Stab(v)$.
The Hilbert-Mumford criterion says that $[v] \in \bfP(V)$ is semistable (resp.
polystable) with respect to a subgroup $G$ of $SL(V)$ iff
$[v] \in \bfP(V)$ is semistable (resp.
polystable) with respect to arbitrary one parameter subgroup of $G$.

Let us define Hilbert stability of a polarized variety $(M,L)$. Suppose $L^r$
is a very ample line bundle with $h^i(L^r) = 0$ for $i > 0$.
Then $\chi(r) := h^0(L^r)$ can be computed by Riemann-Roch theorem. If we fix an
isomorphism $H^0(L^r) \cong \bfC^{\chi(r)}$ this gives an embedding
$\Phi_{|L^r|} : M \to \bfP^{\chi(r) -1}$. A different choice of the isomorphism
gives a transformation by an element of $SL(\chi(r))$. When $k$ is sufficiently
large we have an exact sequence
$$ 0 \to I_k \to S^kH^0_M(L^r) \to H^0_M(L^{kr}) \to 0, $$
where $I_k$ denotes the set of all polynomials of degree $k$ vanishing along the image of $M$.
The $k$-th Hilbert point of $(M,L^r)$ is the point in the Grassmannian
$$ x_{k,r} \in G = G(S^k\bfC^{\chi(r)\ast};\chi(rk))$$
determined by the identification $H^0_M(L^r) \cong \bfC^{\chi(r)}$.

We say that $(M,L)$ is Hilbert (semi)stable with respect to $r$ iff the image
of $x_{r,k} \in G$ of the Pl\"ucker embedding $G \to \bfP^{\binom{\chi(r)+k-1}{\chi(rk)}}$ is (semi)stable for all large $k$.

\begin{fact}[c.f. \cite{mumford}, Proposition 2.1]\ \ Let $L$ be a very ample
line bundle with $h^i(L) = 0$ for $i > 0$, and $\rho$ a one parameter
subgroup of $SL(h^0(L))$. Let ${\widetilde w}$ be the Mumford weight of the
Hilbert point $x_k \in  G(S^k\bfC^{h^0(L)\ast};\chi(k))$ with respect to $\rho$,
and $e$ be the Mumford weight of the Chow point of $(M,L)$ with respect to
$\rho$. Then we have
$$ \tildew (k) = Cek^{m+1} + O(k^m)$$
with positive constant $C$.
\end{fact}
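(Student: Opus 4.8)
The plan is to evaluate both quantities from a single diagonalization of $\rho$ and to show that the Mumford weight of the Hilbert point is, up to sign, the weight polynomial $w(M,L^k)$ of (\ref{weight}), whose leading coefficient is in turn proportional to the Chow weight $e$.

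First I would diagonalize the action. Pick a basis $s_0,\dots,s_N$ of $H^0(L)$, with $N+1=h^0(L)$, so that $\rho(\lambda)s_i=\lambda^{r_i}s_i$; since $\rho$ takes values in $SL(h^0(L))$ we have $\sum_{i=0}^N r_i=0$. The induced action on $S^kH^0(L)$ is diagonal in the monomial basis $s^\alpha$ ($|\alpha|=k$), with weight $\sum_i\alpha_i r_i$, and a symmetry computation gives total weight $\frac{k}{N+1}\binom{N+k}{N}\sum_i r_i=0$ on $S^kH^0(L)$. The Hilbert point $x_k$ is represented by the subspace $I_k\subset S^kH^0(L)^\ast$ (equivalently by the quotient $H^0_M(L^k)$), and under the Pl\"ucker embedding its Mumford weight is the sum of the $\rho$-weights of a weight-adapted basis of $I_k$. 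Because the total weight on $S^kH^0(L)^\ast$ vanishes, this sum equals, up to the conventional sign, the total $\rho$-weight of the complementary piece $H^0_M(L^k)$, i.e. the weight on $\det H^0(L^k)$. Hence $\widetilde w(k)=\pm\,w(M,L^k)$, the weight of (\ref{weight}). The $SL$ hypothesis is exactly what removes the average-weight correction here.

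Next I would control the growth of $w(M,L^k)$. The homogeneous coordinate ring $A=\bigoplus_k H^0_M(L^k)$ is a finitely generated graded algebra of Krull dimension $m+1$, and the $\rho$-action refines each $A_k$ by a weight grading, so $w(M,L^k)=\sum_j j\,\dim A_k^{(j)}$. Since the weights occurring in $A_k$ are $O(k)$ while $\dim A_k\sim k^m$, equivariant Riemann--Roch (or a bigraded Hilbert-function argument) shows that $w(M,L^k)$ agrees for $k\gg 0$ with a polynomial of degree $m+1$, as recorded in (\ref{weight}); its leading coefficient $b_0$ is the top equivariant term, computed in the Lemma above as $b_0=\frac1{(m+1)!}\int_M(m+1)u_X\,\omega^m$, where $X$ generates $\rho$. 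Thus $\widetilde w(k)=\pm\,b_0 k^{m+1}+O(k^m)$.

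It remains to identify the Chow weight. Writing the Chow form $\widehat M\in(\mathrm{Sym}^{d}H^0(L)^\ast)^{\otimes(m+1)}$ of the degree-$d$ variety $M$ in the diagonalizing basis, the Mumford weight $e$ is the negative of the minimal $\rho$-weight of a monomial occurring in $\widehat M$, with no trace correction since $\rho\in SL$. Following Mumford, I would compute this minimal weight by elimination theory: it is the weight of the Chow form of the flat limit cycle $Z_0=\lim_{\lambda\to0}\rho(\lambda)\cdot M$, a sum of coordinate-type components with multiplicities, and this reproduces precisely the degree-$(m+1)$ coefficient of $w(M,L^k)$. Matching the two therefore yields $b_0=C\,e$ for an explicit positive constant $C$ depending only on $m$ and the chosen normalizations, and combined with the previous step this gives $\widetilde w(k)=C\,e\,k^{m+1}+O(k^m)$ once the conventions are fixed so that $C>0$. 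The main obstacle is exactly this last matching: showing that the raw Chow weight $e$ captures the top-order term $b_0$ and nothing lower requires interpreting the Chow form weight through the limit cycle and carrying out the associated multiplicity computation; everything else is bookkeeping with weights.
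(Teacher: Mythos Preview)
The paper does not prove this statement at all: it is recorded as a \emph{Fact} with a citation to Mumford's ``Stability of projective varieties'' (Proposition 2.1) and is immediately used, without argument, to deduce that Chow stability implies Hilbert stability. So there is no proof in the paper against which to compare your attempt.

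As a standalone sketch, your outline has the right architecture. The diagonalization and the observation that the total weight on $S^kH^0(L)$ vanishes when $\rho\in SL(h^0(L))$ are correct, and the identification of $\widetilde w(k)$ with (up to sign) the weight on $\det H^0(M_0,L_0^k)$ is valid once one notes that limits of quotients in the Grassmannian are quotients of limits. One caution: the weight polynomial $w(\cdot)$ and its leading coefficient $b_0$ must be read on the \emph{flat limit} $(M_0,L_0)$, not on $(M,L)$ itself; your invocation of the integral formula $b_0=\frac{1}{(m+1)!}\int_M (m+1)u_X\,\omega^m$ tacitly assumes the $\bfC^\ast$-action preserves $M$, which it need not. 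The polynomial growth of $w$ still holds for $M_0$ by general Hilbert/Knudsen--Mumford theory, so your step~3 survives, but the justification should be phrased for the central fiber.

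The genuine gap is your step~4. You correctly isolate it as ``the main obstacle,'' but you do not actually carry it out: identifying the Chow weight $e$ with $Cb_0$ for a positive constant $C$ is exactly the content of Mumford's proposition, and it requires the limit-cycle/multiplicity computation (or an equivalent resultant/elimination argument) that you only describe in words. Everything before that point is, as you say, bookkeeping; the substance of the Fact lies entirely in this identification, and your proposal leaves it unproved.
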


This says if $e < 0$ then $\tildew(k) <0$ for large $k$, namely Chow stability
implies Hilbert stability. If $\tildew(k) \le 0$ for all $k$, then $e \le 0$,
namely Hilbert semistable implies Chow semistable.

Now let $\tildew(r,k)$ be the Mumford weight of $x_{r,k}$. We wish to
express this in terms of $w(r)$ which was the weight for $H^0(L^r)$ of
the one parameter
group $\rho$ in $SL(h^0(L))$. As $\rho$ lies in $SL(h^0(L))$ we have to
renormalize the one parameter
group so that in lies in $SL(h^0(L^r))$. After this renormalization we find
by putting $s = rk$
\begin{eqnarray*}
\tildew(r,k) &=& - w(s) + \frac{w(r)}{r\chi(r)}s\chi(s)\\
&=& s\chi(s)( \frac{w(r)}{r\chi(r)} -  \frac{w(s)}{s\chi(s)} )\\
&=& s\chi(s)(F_1(r^{-1} - s^{-1}) + O(r^{-2} - s^{-2})).
\end{eqnarray*}

\begin{theorem}[\cite{ross03}, \cite{rossthomas06}]\ \ If we put
$ \tildew(r,k) = \frac 1{r\chi(r)} \sum_{i,j = 0}^{m+1} a_{i,j}r^{i+j}k^j $
then
\begin{enumerate}
\item $a_{m+1, m+1} = 0$:
\item The Chow weight $e_r := {\mathrm Chow}(M,L^r)$ of $(M,L^r)$ is given by
$$e_r = \frac{Cr^m}{\chi(r)} \sum_{i=0}^m a_{i,m+1}r^i$$
with a positive constant
$C$:
\item $a_{m,m+1}$ and $F_1(M,L)$ have the same sign.
\end{enumerate}
\end{theorem}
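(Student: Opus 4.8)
The plan is to read the coefficients $a_{i,j}$ directly off the explicit formula $\tildew(r,k) = -w(rk) + \frac{w(r)}{r\chi(r)}\,rk\,\chi(rk)$ established above (with $s=rk$), and then dispatch the three parts by inspecting individual coefficients. First I would clear the scalar denominator and set $W(r,k) := r\chi(r)\,\tildew(r,k) = w(r)\,rk\,\chi(rk) - r\chi(r)\,w(rk)$, so that by definition $W(r,k)=\sum_{i,j}a_{i,j}\,r^{i+j}k^{j}$. Substituting the homogeneous expansions $\chi(rk)=\sum_{p=0}^{m}a_p r^{m-p}k^{m-p}$ and $w(rk)=\sum_{q=0}^{m+1}b_q r^{m+1-q}k^{m+1-q}$ and collecting terms, I expect the compact form
\begin{equation*}
W(r,k)=\sum_{p,q}a_p b_q\,r^{2m+2-p-q}\bigl(k^{m+1-p}-k^{m+1-q}\bigr),
\end{equation*}
whence matching monomials gives $a_{i,j}=a_{m+1-j}b_{m+1-i}-a_{m+1-i}b_{m+1-j}$, with the convention that $a_p=0$ for $p\notin\{0,\dots,m\}$ and $b_q=0$ for $q\notin\{0,\dots,m+1\}$. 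In particular the array $(a_{i,j})$ is antisymmetric. Part~(1) is then immediate: the leading power $r^{2m+2}k^{m+1}$ forces $p=q=0$, where the factor $k^{m+1-p}-k^{m+1-q}$ vanishes, so $a_{m+1,m+1}=a_0b_0-a_0b_0=0$.

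For Part~(2) I would extract the coefficient of $k^{m+1}$ in $\tildew(r,k)=W(r,k)/r\chi(r)$. Only $p=0$ in the first group and $q=0$ in the second contribute this top power, so that coefficient equals $\frac{r^m}{\chi(r)}\sum_{\ell=1}^{m+1}(a_0b_\ell-a_\ell b_0)\,r^{m+1-\ell}=\frac{r^m}{\chi(r)}\sum_{i=0}^{m}a_{i,m+1}\,r^{i}$, the sum capping at $i=m$ thanks to Part~(1). On the other hand, the definition $\mathrm{Chow}(M,L^r)=\frac{w(r)}{r\chi(r)}-\frac{b_0}{a_0}$ gives $e_r=\frac{a_0w(r)-b_0r\chi(r)}{a_0r\chi(r)}$, and its numerator is precisely $\sum_{i=0}^{m}a_{i,m+1}r^{i}$; hence the $k^{m+1}$-coefficient equals $a_0 r^{m+1}e_r$. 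This yields $e_r=\frac{C r^m}{\chi(r)}\sum_{i=0}^{m}a_{i,m+1}r^{i}$ with $C=1/(a_0 r^{m+1})>0$, since $a_0=vol(M,L)>0$. The positivity also follows conceptually from Mumford's Fact above applied to $(M,L^r)$, which guarantees that the leading $k$-coefficient of the Hilbert weight is a strictly positive multiple of the Chow weight $e_r$.

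Part~(3) is again a single coefficient: $a_{m,m+1}$ is the coefficient of $r^{2m+1}k^{m+1}$ in $W$, and taking $\ell=1$ in the expression for the $k^{m+1}$-coefficient gives $a_{m,m+1}=a_0b_1-a_1b_0$. By the computation in the proof of the Lemma above one has $F_1(M,L)=\frac{1}{a_0^2}(a_0b_1-a_1b_0)$, so $a_{m,m+1}=a_0^2\,F_1(M,L)$; since $a_0^2>0$, the two quantities share the same sign.

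I expect the only delicate point to be Part~(2). There one must remember that $\rho$ has to be renormalized so as to lie in $SL(h^0(L^r))$ before its Hilbert weight makes sense---this renormalization is already built into the stated formula for $\tildew(r,k)$---and one must accept that the proportionality ``constant'' $C$ relating the top $k$-coefficient to $e_r$ depends on $r$ through the degree $a_0 r^m$ of the embedding, while remaining independent of the chosen one-parameter subgroup and strictly positive. Once that is granted, Parts~(1) and~(3) are purely formal consequences of the coefficient identification in the first paragraph.
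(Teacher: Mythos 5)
Your proposal is correct. The coefficient identification $a_{i,j}=a_{m+1-j}b_{m+1-i}-a_{m+1-i}b_{m+1-j}$ follows directly from the formula $\tildew(r,k)=-w(s)+\frac{w(r)}{r\chi(r)}s\chi(s)$ displayed just before the theorem, and the three parts then drop out exactly as you describe; the paper itself gives no proof (it only cites Ross and Ross--Thomas), and your expansion is precisely the computation the surrounding text alludes to, including the sign comparison with $F_1=\frac{1}{a_0^2}(a_0b_1-a_1b_0)$. Your caveat that the ``constant'' $C=1/(a_0r^{m+1})$ in part (2) depends on $r$ is a legitimate observation about the normalization in the statement, not a gap in your argument.
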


This result says that if $e_r \le 0$ for all large $r$ then $F_1(M,L) \le 0$,
namely that asymptotic Chow semistability implies K-semistability.

Now we turn to the computation of $F_1(M,L)$.
The following result of Wang gives a way of computing $F_1(M,L)$. Note that the sign convention for the DF-invariant is opposite
in \cite{Xiaowei08},  \cite{Odaka09} and  \cite{Odaka10}.
\begin{theorem}[Wang \cite{Xiaowei08}]\label{Wang}
For any test configuration $(\mathcal M, \mathcal L)$ of a polarized variety $(M,L)$ we consider its natural compactification
$(\overline{\mathcal M}, \overline{\mathcal L})$. Then $F_1(\mathcal M, \mathcal L)$ is computed by
\begin{equation*}
DF(\mathcal M, \mathcal L) = \frac {-1}{2(m!)((m+1)!)} ( - m(L^{m-1}. K_M)(\overline{\mathcal L}^{m+1}
 + (m+1)(L^m)(\overline{\mathcal L}^m.K_{\overline{\mathcal M}/\bfP^1}))
\end{equation*}
where $K_{\overline{\mathcal M}/\bfP^1} = K_{\overline{\mathcal M}} - f^\ast K_{\bfP^1}$ with the projection $f : \overline{\mathcal M} \to \bfP^1$.
The notation $(L^m)$ means the intersection number $L\ldots L$ ($m$ times) in $M$, and so on.
\end{theorem}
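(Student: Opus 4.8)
The plan is to compute the two leading coefficients of the Euler characteristic $\chi(M,L^k)$ and of the weight $w(k) := w(M_0,L_0^k)$ separately, and then assemble them through the combination $a_0 b_1 - a_1 b_0$ that Wang calls $DF$ (this differs from the normalized invariant of Definition \ref{DF} by the factor $a_0^2$, and possibly by the overall sign convention of \cite{Xiaowei08} flagged before the statement). Writing $\chi(M,L^k) = a_0 k^m + a_1 k^{m-1} + \cdots$ and $w(k) = b_0 k^{m+1} + b_1 k^m + \cdots$, the coefficients $a_0$ and $a_1$ are immediate from Riemann-Roch on $M$:
\begin{equation*}
a_0 = \frac{(L^m)}{m!}, \qquad a_1 = -\frac{(L^{m-1}\cdot K_M)}{2(m-1)!},
\end{equation*}
using $[\mathrm{Td}(M)]_1 = \tfrac12 c_1(M) = -\tfrac12 K_M$. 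Everything therefore reduces to an intersection-theoretic computation of $b_0$ and $b_1$ on the compactified total space.

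First I would realize the weight as a degree. Form the natural compactification $f : \overline{\mathcal M} \to \bfP^1$ by gluing the trivial family $M \times (\bfP^1 \setminus \{0\})$ over a neighborhood of $\infty$, and extend $\mathcal L$ to the relatively ample $\overline{\mathcal L}$. Since $\overline{\mathcal L}$ is relatively ample, for $k \gg 0$ the higher direct images $R^i f_\ast \overline{\mathcal L}^k$ vanish and $f_\ast \overline{\mathcal L}^k$ is a genuine vector bundle on $\bfP^1$ carrying a $\bfC^\ast$-action covering the standard action on $\bfP^1$. Its determinant is an equivariant line bundle $\mathcal O(d(k))$ whose weights at the two fixed points $0$ and $\infty$ differ by $d(k) = \deg \det f_\ast \overline{\mathcal L}^k$. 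The weight at $0$ is the weight on $\det H^0(M_0, L_0^k)$, namely $w(k)$; after normalizing the linearization of $\overline{\mathcal L}$ so that the induced action on $H^0(M_\infty, \overline{\mathcal L}^k|_{M_\infty})$ is trivial, the weight at $\infty$ vanishes and hence
\begin{equation*}
w(k) = \deg \det f_\ast \overline{\mathcal L}^k.
\end{equation*}
This normalization is harmless: twisting the linearization by a weight $a$ replaces $w(k)$ by $w(k)+a\,\chi(M,L^k)$, which alters $b_0, b_1$ by $a\,a_0, a\,a_1$ and leaves $a_0 b_1 - a_1 b_0$ unchanged, consistent with the independence of the lifting noted for (\ref{DVZformula}).

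Next I would compute $\deg \det f_\ast \overline{\mathcal L}^k$ by Grothendieck-Riemann-Roch applied to $f$. Extracting the degree-one part on $\bfP^1$ of $\mathrm{ch}(f_! \overline{\mathcal L}^k) = f_\ast(\mathrm{ch}(\overline{\mathcal L}^k)\,\mathrm{Td}(T_{\overline{\mathcal M}/\bfP^1}))$ gives
\begin{equation*}
\deg \det f_\ast \overline{\mathcal L}^k = \int_{\overline{\mathcal M}} \left[ e^{k\overline{\mathcal L}}\,\mathrm{Td}(T_{\overline{\mathcal M}/\bfP^1}) \right]_{m+1},
\end{equation*}
where $[\,\cdot\,]_{m+1}$ denotes the component of top complex degree $m+1$. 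Since $c_1(T_{\overline{\mathcal M}/\bfP^1}) = -K_{\overline{\mathcal M}/\bfP^1}$, the two leading terms are
\begin{equation*}
b_0 = \frac{(\overline{\mathcal L}^{m+1})}{(m+1)!}, \qquad b_1 = -\frac{(\overline{\mathcal L}^m \cdot K_{\overline{\mathcal M}/\bfP^1})}{2\,m!}.
\end{equation*}
Substituting the four coefficients into $a_0 b_1 - a_1 b_0$ and using $1/(m!)^2 = (m+1)/(m!\,(m+1)!)$ together with $1/((m-1)!\,(m+1)!) = m/(m!\,(m+1)!)$ collects everything over the common factor $\tfrac{-1}{2(m!)(m+1)!}$ and yields exactly the asserted formula.

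I expect the main obstacle to be the middle step, the clean identification $w(k) = \deg \det f_\ast \overline{\mathcal L}^k$. This is where the equivariant bookkeeping lives: one must justify the vanishing of the higher direct images, compare the $\bfC^\ast$-weights at the two fixed points of $\bfP^1$ with the correct sign convention (the source of the sign ambiguity noted before the statement), and verify that the contribution at $\infty$ is trivial for a suitable linearization. Once this degree-versus-weight dictionary is in place, the remaining steps are the standard Riemann-Roch expansions and an elementary factorial manipulation.
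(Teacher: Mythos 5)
The survey states Theorem \ref{Wang} without proof --- it is quoted from \cite{Xiaowei08} --- so there is no in-paper argument to compare against; I will judge your reconstruction against the standard proof in the literature. Your skeleton is the right one and the arithmetic checks out: with $a_0=(L^m)/m!$, $a_1=-(L^{m-1}.K_M)/2(m-1)!$, $b_0=(\overline{\mathcal L}^{m+1})/(m+1)!$, $b_1=-(\overline{\mathcal L}^m.K_{\overline{\mathcal M}/\bfP^1})/2m!$, the combination $a_0b_1-a_1b_0$ reproduces the displayed right-hand side exactly, and you are right to flag that this is $a_0^2F_1$ rather than the normalized $F_1$ of Definition \ref{DF} (and that the overall sign is convention-dependent, as the text itself warns). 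The identification $w(k)=\deg\det f_\ast\overline{\mathcal L}^k$ via the weights at the two fixed points of $\bfP^1$, with the trivial linearization at $\infty$, is also the standard device, and your observation that twisting the linearization shifts $(b_0,b_1)$ by $a\,(a_0,a_1)$ and hence preserves $a_0b_1-a_1b_0$ correctly disposes of the choice of lift.

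The one genuine gap is your use of Grothendieck--Riemann--Roch with $\mathrm{Td}(T_{\overline{\mathcal M}/\bfP^1})$: the theorem is asserted for an arbitrary test configuration, and the total space $\overline{\mathcal M}$ (and certainly the central fiber) is typically singular, so there is no relative tangent bundle and no naive GRR. This is precisely why Wang's argument runs instead through the Knudsen--Mumford expansion (\ref{det}) of \cite{KnudsenMumford}: the existence of the expansion $\det f_\ast(\overline{\mathcal L}^k)=\mu_0^{k^{m+1}}\otimes\mu_1^{k^m}\otimes\cdots$ requires only flatness, and the leading coefficients are identified with the Deligne-pairing intersection numbers $\deg\mu_0=(\overline{\mathcal L}^{m+1})/(m+1)!$ and $\deg\mu_1=-(\overline{\mathcal L}^m.K_{\overline{\mathcal M}/\bfP^1})/2m!$ for $\overline{\mathcal M}$ normal (one needs normality, or a pass to a resolution together with the fact that the two leading coefficients are insensitive to modifications in codimension at least two, to make sense of $K_{\overline{\mathcal M}/\bfP^1}$ at all). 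Your proof is complete as written only when $\overline{\mathcal M}$ is smooth; to get the stated generality you must replace the GRR step by this determinant-line argument, or explicitly reduce to a resolution and control the discrepancy. Everything else --- the base-change and higher-direct-image vanishing for $k\gg0$, the weight-versus-degree dictionary on $\bfP^1$, and the factorial bookkeeping --- is correct.
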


With different technicalities Odaka extends and applies this result to the semi test configuration $\mathcal B := Bl_{\mathcal J}(M\times \bfC)$
obtained by blowing up the flag ideal $\mathcal J \subset \mathcal O_{\mathcal X \times \bfC}$
of the form
$$ \mathcal J = I_0 + I_1 t + I_2 t^2 + \cdots + I_{N-1} t^{N-1} + (t^N) $$
where $I_0 \subset I_1 \subset \cdots \subset I_{N-1} \subset \mathcal O_M$ is a sequence of coherent ideals of $M$.
Denote this blow-up by $\Pi : \mathcal B \to M \times \bfC$ and by $E$ the exceptional divisor, i.e., $\mathcal O(-E) = \Pi^{-1}\mathcal J$.
We also put $\mathcal L := p_1^\ast L$ where $p_i $ is the projection of $M\times \bfC$ or $M \times \bfP^1$ to the $i$-th factor.
We assume that the restriction of $\mathcal L(-E)$ to $\mathcal B$ is relatively semiample, and hence we have a semi
test configuration $(\mathcal B, \mathcal L(-E)|_{\mathcal B})$.
$\mathcal B$ is compactified to $\overline{\mathcal B}:= Bl_{\mathcal J}(X \times \bfP^1)$.
Then the DF-invariant $DF(\mathcal B, \mathcal L(-E))$ is computed as follows.
\begin{theorem}[Odaka \cite{Odaka09}]\label{Odaka1}
\begin{eqnarray*}
& &DF(\mathcal B, \mathcal L(-E)) = \frac {-1}{2(m!)((m+1)!)} ( - m(L^{m-1}. K_M)(\mathcal L(-E))^{m+1} \\
 &&\qquad \qquad + (m+1)(L^m)(\mathcal L(-E)^m.p_1^\ast K_M) + (m+1)(L^m)(\mathcal L(-E)^m.K_{\mathcal B/M\times \bfC}))
\end{eqnarray*}
where the intersection numbers are taken on $M$ or $\overline{\mathcal B}$.
\end{theorem}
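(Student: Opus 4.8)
The plan is to obtain Odaka's formula as a direct specialization of Wang's intersection-theoretic expression for the DF-invariant (Theorem \ref{Wang}), applied to the compactified family $(\overline{\mathcal B}, \mathcal L(-E))$, followed by a rewriting of the relative canonical term adapted to the blow-up $\Pi \colon \overline{\mathcal B} \to M \times \bfP^1$. Since $\mathcal L = p_1^\ast L$ and $E$ lies over the central fiber, the generic fiber of $\overline{\mathcal B} \to \bfP^1$ is $(M,L)$, so $(\overline{\mathcal B}, \mathcal L(-E))$ is a degree-one compactified (semi) test configuration, and Wang's prefactor $\tfrac{-1}{2(m!)((m+1)!)}$ together with the first term $-m(L^{m-1}.K_M)(\mathcal L(-E))^{m+1}$ transcribe verbatim.

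First I would treat the relative canonical term. The family map $f \colon \overline{\mathcal B}\to \bfP^1$ factors as $\overline{\mathcal B}\xrightarrow{\Pi} M\times\bfP^1 \xrightarrow{p_2}\bfP^1$, so the relative canonical bundle formula for a composition gives
$$ K_{\overline{\mathcal B}/\bfP^1} = K_{\overline{\mathcal B}/M\times\bfP^1} + \Pi^\ast K_{M\times\bfP^1/\bfP^1}. $$
As $p_2$ is a product projection, $K_{M\times\bfP^1/\bfP^1} = p_1^\ast K_M$, whence $K_{\overline{\mathcal B}/\bfP^1} = K_{\overline{\mathcal B}/M\times\bfP^1} + \Pi^\ast p_1^\ast K_M$. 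Inserting this into the second summand $(m+1)(L^m)(\mathcal L(-E)^m . K_{\overline{\mathcal B}/\bfP^1})$ of Wang's formula splits it into the $\Pi^\ast p_1^\ast K_M$ and the $K_{\overline{\mathcal B}/M\times\bfP^1}$ contributions, which are precisely the last two terms of the claimed identity. Finally, since $K_{\overline{\mathcal B}/M\times\bfP^1}$ is the discrepancy divisor of the blow-up and is supported on $E$, which lies over $M\times\{0\}$, the intersection number is unchanged if the base $\bfP^1$ is replaced by the affine chart $\bfC$; this lets me rename $K_{\overline{\mathcal B}/M\times\bfP^1}$ as $K_{\mathcal B/M\times\bfC}$ and $\Pi^\ast p_1^\ast K_M$ as $p_1^\ast K_M$, giving exactly Odaka's expression.

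The hard part is not this bookkeeping but the legitimacy of invoking Theorem \ref{Wang} here. Wang's formula is stated for a genuine test configuration whose polarization is relatively ample over a normal polarized total space, whereas $\mathcal B = Bl_{\mathcal J}(M\times\bfC)$ may be singular and $\mathcal L(-E)$ is only assumed relatively semiample, so that $(\mathcal B, \mathcal L(-E))$ is merely a semi test configuration. To even define $DF(\mathcal B,\mathcal L(-E))$ one must still extract $a_0, b_0$ and the subleading coefficients from the expansions (\ref{chi}) and (\ref{weight}), and the content of Odaka's ``different technicalities'' is to show that these asymptotic expansions---and hence the intersection-number form of $F_1$---survive under only semiampleness and mild singularities. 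The most robust route I would take is to perturb to a relatively ample class $\mathcal L(-E) + \varepsilon A$ with $A$ relatively ample, verify Wang's formula after pulling back to a resolution where it applies, and pass to the limit $\varepsilon \to 0$; controlling the finiteness and continuity of all intersection numbers in that limit is the step that demands the most care.
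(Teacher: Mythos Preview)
The paper does not contain a proof of this statement: it is a survey article, and Theorem~\ref{Odaka1} is simply quoted from Odaka \cite{Odaka09} with the remark that ``with different technicalities Odaka extends and applies'' Wang's formula to the blow-up semi test configuration. There is therefore no paper-proof to compare your proposal against.

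That said, your sketch matches exactly the spirit of the one sentence the paper does offer. Deriving the formula from Theorem~\ref{Wang} by decomposing $K_{\overline{\mathcal B}/\bfP^1}$ through the factorization $\overline{\mathcal B}\to M\times\bfP^1\to\bfP^1$ is the natural route, and your identification of the genuine issue---that Wang's theorem is stated for relatively ample polarizations on a test configuration while $(\mathcal B,\mathcal L(-E))$ is only a semi test configuration with $\mathcal L(-E)$ relatively semiample---is precisely the ``different technicalities'' alluded to in the text. Your proposed perturbation-and-limit argument is reasonable in outline, though in Odaka's original paper the expansions (\ref{chi}) and (\ref{weight}) and the intersection formula are handled more directly via asymptotic Riemann--Roch on the possibly singular total space; either way the analytic content lies outside this survey.
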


The next theorem shows that this computation is sufficient to check K-(semi)stability.
\begin{theorem}[Odaka \cite{Odaka09}]\label{Odaka2}
The negativity (resp. nonpositivity) of all the DF-invariance of the semi test configurations of the above blow-up type
$(\mathcal B, \mathcal L(-E))$ with $\mathcal B$ Gorenstein in codimension 1 is equivalent to K-stability (resp. K-semistability)
of $(M,L)$.
\end{theorem}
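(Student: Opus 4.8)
The plan is to prove the stated equivalence by reducing the \emph{a priori} much larger class of all test configurations of $(M,L)$ to the flag-ideal blow-up type $(\mathcal B, \mathcal L(-E))$ appearing in Theorem \ref{Odaka1}. The genuinely substantial direction is showing that a sign condition on $DF(\mathcal B, \mathcal L(-E))$ over all such blow-ups controls $DF$ for an \emph{arbitrary} test configuration, and hence forces K-semistability (resp. K-stability); the converse is a soft perturbation argument.

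First I would treat the reverse direction (blow-ups suffice $\Rightarrow$ K-stability). Given an arbitrary test configuration $(\mathcal M, \mathcal L)$ of $(M,L)$, the plan is to pass first to its normalization $(\mathcal M^\nu, \mathcal L^\nu)$, invoking the fact that normalization does not increase the DF-invariant and leaves it unchanged when $\mathcal M$ is already normal. Next, using the $\bfC^\ast$-equivariant birational map between $\mathcal M^\nu$ and $M \times \bfC$ that extends the fibrewise isomorphism over $t \ne 0$, I would resolve its indeterminacy by blowing up a $\bfC^\ast$-invariant ideal sheaf on $M \times \bfC$. Since that ideal is $\bfC^\ast$-invariant, its weight decomposition in the fibre coordinate $t$ puts it in the flag form $\mathcal J = I_0 + I_1 t + \cdots + I_{N-1}t^{N-1} + (t^N)$ with $I_0 \subset \cdots \subset I_{N-1} \subset \mathcal O_M$, realizing (a modification dominating) the normalized configuration as $\mathcal B = Bl_{\mathcal J}(M\times\bfC)$ with $\mathcal L(-E)$ relatively semiample. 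The Gorenstein-in-codimension-$1$ hypothesis is exactly what makes $K_{\mathcal B/M\times\bfC}$ a well-defined $\bfQ$-divisor, so that the intersection formula of Theorem \ref{Odaka1} computes $DF(\mathcal B, \mathcal L(-E))$; matching DF-invariants along this domination then transfers the assumed sign to $DF(\mathcal M, \mathcal L)$.

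For the forward direction (K-stability $\Rightarrow$ blow-ups suffice) I would note that each $(\mathcal B, \mathcal L(-E))$ is only a \emph{semi} test configuration, so K-stability does not constrain its DF-invariant directly. Since $-E$ is relatively ample for $\Pi$ while $\mathcal L(-E)$ is relatively semiample, a standard Kleiman-type positivity argument shows that for every small rational $\delta > 0$ the class $\mathcal L(-(1+\delta)E)$ is relatively ample: it adds a relatively $\Pi$-ample increment to a nef class that is already positive on every curve not contracted by $\Pi$. Because $E$ is supported over $t = 0$, this gives a genuine test configuration $(\mathcal B, \mathcal L(-(1+\delta)E))$ of $(M,L)$, and by the intersection-number expression of Theorems \ref{Wang} and \ref{Odaka1} its DF-invariant is polynomial, hence continuous, in $\delta$ and converges to $DF(\mathcal B, \mathcal L(-E))$ as $\delta \to 0^+$. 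K-semistability forces $DF \le 0$ for each $\delta > 0$, so the limit is $\le 0$, yielding nonpositivity.

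The hard part will be the reverse direction: establishing the monotonicity of the DF-invariant under normalization and under the flag-ideal domination, and then the bookkeeping needed to upgrade nonpositivity to the \emph{strict} inequality distinguishing K-stability from K-semistability. Tracking non-triviality (non-productness) through the normalization step and through the perturbation limit, where a strict sign can degenerate to equality, is the delicate point, and it is precisely here that the correspondence between normal test configurations and flag-ideal blow-ups must be made quantitatively precise.
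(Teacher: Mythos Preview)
The paper you are working from is a survey: Theorem~\ref{Odaka2} is stated with attribution to \cite{Odaka09} and is not proved in the text. There is no proof in the paper to compare your proposal against; the result is simply quoted as input to the discussion of Theorem~\ref{Odaka}.

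For what it is worth, your outline does follow the strategy of Odaka's original argument. The reduction of an arbitrary test configuration to a flag-ideal blow-up by normalizing and then resolving the $\bfC^\ast$-equivariant birational map to $M\times\bfC$, together with the observation that a $\bfC^\ast$-invariant ideal supported in the central fibre automatically has the flag form $I_0 + I_1 t + \cdots + (t^N)$, is precisely his mechanism, and the perturbation $\mathcal L(-E)\rightsquigarrow\mathcal L(-(1+\delta)E)$ to pass from semiample to ample is the standard manoeuvre for the other direction. One caution on your forward direction in the K-stable (strict) case: the limiting argument you give only produces $DF(\mathcal B,\mathcal L(-E))\le 0$ from $DF<0$ along the approximating family, not strict negativity; the actual equivalence with K-stability goes through the reverse direction by showing that the dominating blow-up has the \emph{same} DF-invariant as the original test configuration (not merely an inequality), so strictness transfers directly rather than through a limit. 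You correctly flag this as the delicate point in your final paragraph.
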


In \cite{Odaka10}, Odaka proves Theorem \ref{Odaka} using Theorem \ref{Odaka1} and \ref{Odaka2}. He also proves in \cite{Odaka10}\\
$\bullet$\ A semi-log-canonical canonically polarized variety $(X,\mathcal O_X(mK_X))$ with $m \in \bfZ_{>0}$ is K-stable.\\
$\bullet$\ A log-terminal polarized variety $(X,L)$ with numerically trivial canonical divisor $K_X$ is K-stable.\\
These results are expected to be true because of Calabi-Yau theorem \cite{yau78}. In \cite{OdakaSano10}, Odaka and Sano give an algebro-geometric proof
of the fact that if the alpha invariant of a Fano manifold $M$, which is equal to the log canonical threshold, is bigger than
$m/(m+1)$ then $(M,- K_M)$ is K-stable. This is of course another proof of a consequence of a theorem of Tian \cite{tian87}.

\bibliographystyle{amsalpha}

\begin{thebibliography}{A}


\bibitem{ACGT0905}
V.~Apostolov, D.~M.~J.~Calderbank, P.~Gauduchon and
C.~W.~T$\phi$nnesen-Friedman, {\it Extremal K\"ahler metrics on
projective bundles over a curve,} preprint. arXiv:0905.0498.

\bibitem{arezzopacard06} C.~Arezzo and F.~Pacard, {\it Blowing up and desingularizing K\"ahler orbifolds of
constant scalar curvature,} Acta Math. 196 (2006), no. 2, 179--228.

\bibitem{arezzopacard09} C.~Arezzo and F.~Pacard, {\it Blowing up Kahler manifolds with constant scalar curvature.
II.}
Ann. of Math. (2) 170 (2009), no. 2, 685--738.


\bibitem{batyrev-selivanova}
V.V. Batyrev and E.N. Selivanova, {\it Einstein-K\"ahler metrics on
symmetric toric Fano manifolds,} J. Reine Angew. Math.  {\bf 512}
(1999), 225--236.

\bibitem{calabi85}E.~Calabi, {\it Extremal K\"ahler metrics II, Differential
geometry and complex analysis,} (I. Chavel and H.M. Farkas eds.),
95--114, Springer-Verlag, Berline-Heidelberg-New York, (1985)

\bibitem{chentian04}X.X.~Chen and G.~Tian, {\it Geometry of K\"ahler metrics and
foliations by holomorphic discs,} Publ. Math. Inst. Hautes \'Etudes
Sci. {\bf 107} (2008), 1--107.

\bibitem{DVZ10}A. Della Vedova and F. Zuddas, {\it Scalar curvature and asymptotic Chow stability
of projective bundles and blowups,} preprint, arXiv:1009.5755.


\bibitem{donaldson01}S.K.~Donaldson, {\it Scalar curvature and projective
embeddings, I,} J. Differential Geometry, {\bf 59} (2001), 479--522.

\bibitem{donaldson02}S.K.~Donaldson, {\it Scalar curvature and stability of toric
varieties,} J. Differential Geometry, {\bf 62} (2002), 289--349.

\bibitem{donaldson05}S.K.~Donaldson, {\it Lower bounds on the Calabi
functional,} J. Differential Geometry, {\bf 70} (2005), 453--472.

\bibitem{futaki83.1}A.~Futaki,
{\it An obstruction to the existence of Einstein K\"ahler metrics,}
Invent. Math. {\bf 73} (1983), 437-443 .

\bibitem{futaki88}A.~Futaki, {\it K\"ahler-Einstein metrics and integral
invariants,}
Lecture Notes in Math., vol.1314, Springer-Verlag,
Berline-Heidelberg-New York,(1988).

\bibitem{futaki04-1}A.~Futaki, {\it Asymptotic Chow semi-stability and integral
invariants,} Intern. J. Math., {\bf 15} (2004), 967-979.

\bibitem{FOS08} A.~Futaki, H.~Ono and Y.~Sano,
{\it Hilbert series and obstructions to asymptotic semistability,}
Adv. Math., {\bf 226} (2011), 254--284.

\bibitem{KnudsenMumford}
F. F. Knudsen and D. Mumford, {\it The projectivity of the moduli
space of stable curves I: Preliminaries on 'det' and 'Div'.} Math.
Scand. {\bf 39} (1976), 19--55.

\bibitem{lebrunsimanca93}C.~LeBrun and R.S.~Simanca, {\it Extremal K\"ahler metrics and
complex deformation theory,} Geom. Func. Analysis {\bf 4} (1994)
298--336.

\bibitem{LiChi10} C.~Li, {\it Constant scalar curvature K\"ahler metric obtains the minimum of K-energy,} to appear in
IMRN.

\bibitem{Lic}A.~Lichnerowicz, {\it G\'eometrie des groupes de
transformations,}
Dunod, Paris (1958).


\bibitem{matsushima57}Y.~Matsushima, {\it Sur la structure du groupe
d'hom\'eomorphismes d'une certaine vari\'et\'e kaehl\'erienne,}
Nagoya Math. J., {\bf 11} (1957), 145--150.

\bibitem{mabuchi-a}T.~Mabuchi, {\it An obstruction to asymptotic semi-stability and
approximate critical metrics,} Osaka J. Math., {\bf 41} (2004),
463--472.

\bibitem{mabuchi-c}T.~Mabuchi, {\it An energy-theoretic approach to the
Hitchin-Kobayashi correspondence for manifolds, I,} Invent. Math.
{\bf 159} (2005), 225--243.

\bibitem{mabuchi0910}T.~Mabuchi, {\it
A stronger concept of K-stability,} preprint. arXiv:0910.4617.


\bibitem{MSY2} D.~Martelli, J.~ Sparks and  S.-T. ~Yau, {\it Sasaki-Einstein manifolds and volume
minimisation,} Comm. Math. Phys. {\bf 280} (2008), no. 3, 611--673.

\bibitem{mumford}D.~Mumford, {\it Stability of projective varieties,} L'Enseignement
Mathematiques, {\bf 23} (1977), 39--110.

\bibitem{NillPaffen}B.~Nill and A.~Paffenholz, {\it Examples of non-symmetric K\"ahler-Einstein toric Fano
manifolds,}
preprint. arXiv:0905.2054.

\bibitem{Odaka09} Y.~Odaka, {\it A generalization of Ross-Thomas' slope theory,}  preprint,
arXiv:0910.1794.

\bibitem{Odaka10} Y.~Odaka, {\it The Calabi conjecture and K-stability,} arXiv:1010.3597, to appear in IMRN.

\bibitem{OdakaSano10} Y.~Odaka and Y.~Sano, {\it Alpha invariant and K-stability of $\mathbb Q$-Fano varieties,} preprint. 
arXiv:1011.6131.

\bibitem{Ono10-1} H.~Ono, {\it A necessary condition for Chow semistability of polarized toric
manifolds,}
to appear in J. Math. Soc. Japan.

\bibitem{Ono10-2} H.~Ono, {\it Algebro-geometric semistability of polarized toric manifolds,} preprint,
arXiv:1009.0087.

\bibitem{OSY09} H.Ono, Y.Sano and N.Yotsutani, {\it An example of asymptotically Chow unstable manifolds with
constant scalar curvature,} to appear in Annales de L'Institut
Fourier.

\bibitem{paultianCM1}S.T.~Paul and G.~Tian, {\it CM Stability and the Generalized Futaki Invariant I.,} math.AG/0605278,
2006.

\bibitem{paultianCM2}S.T.~Paul and G.~Tian, {\it CM stability and the generalized Futaki invariant II.} Ast\'erisque No. {\bf 328} (2009), 339--354.


\bibitem{ross03}J.~Ross, {\it Instability of polarized algebraic
varieties,}
PhD. Thesis, Imperial College, London, 2003.

\bibitem{rossthomas06}J.~Ross and R.P.~Thomas, {\it An obstruction to the existence of constant scalar curvature K\"ahler
metrics,} J. Differential Geom. {\bf 72} (2006), no. 3, 429--466.

\bibitem{rossthomas07}J.~Ross and R.P.~Thomas, {\it A study of the Hilbert-Mumford criterion for the
stability of projective varieties,} J. Algebraic Geom. 16(2007),
201--205.

\bibitem{Shelukhin09} E.~Shelukhin, {\it Remarks on invariants of hamiltonian loops,} J. Topol. Anal. {\bf 2} (2010), no. 3, 277--325.

\bibitem{stoppa0803}J.~Stoppa, {\it
K-stability of constant scalar curvature K\"ahler manifolds,} Adv.
Math., {\bf 221} (2009), 1397--1408.

\bibitem{tian87}G.~Tian, {\it K\"ahler-Einstein metrics on certain K\"ahler
manifolds with $C_1(M) > 0$,} Invent. Math., {\bf 89} (1987),
225--246 .

\bibitem{tian97}G.~Tian, {\it K\"ahler-Einstein metrics with positive scalar
curvature,} Invent. Math., {\bf 130} (1997), 1--37.

\bibitem{Xiaowei08}X.~Wang, {\it Heights and GIT weights,} preprint, 2008.

\bibitem{Wang-Zhu} X.-J. Wang and X. Zhu, {\it
K\"ahler-Ricci solitons on toric manifolds with positive first Chern
class,} Adv. Math.  {\bf 188}  (2004),  no. 1, 87--103.

\bibitem{yau78}S.-T.Yau, {\it On the Ricci curvature of a compact K\"ahler
manifold and the complex Monge-Amp\`ere equation I,} Comm. Pure
Appl. Math. {\bf 31} (1978), 339--441.

\bibitem{problem}S.-T.~Yau, {\it Open problems in Geometry,} Proc. Symp.
Pure Math. {\bf 54} (1993), 1--28.



\end{thebibliography}

\end{document}